\newcommand{\ignore}[1]{}
\newtheorem{theodef}{Theorem/Definition}[section]
\newtheorem{theorem}{Theorem}[section]
\newtheorem{corollary}[theorem]{Corollary}
\newtheorem{proposition}[theorem]{Proposition}
\theoremstyle{definition}
\newtheorem{definition}[theorem]{Definition}
\newtheorem{example}[theorem]{Example}
\theoremstyle{remark}
\newtheorem{remark}[theorem]{Remark}
\numberwithin{equation}{section}
\newcommand{\KK}{\mathbb{K}}
\newcommand{\ZZ}{{\mathbb{Z}}}
\newcommand{\Ass}{\mbox{\rm{Ass}} }
\newcommand{\dm}{\mathrm {dim }}
\newcommand{\Ext}{\mbox{\rm{Ext}} }
\newcommand{\fM}{{\mathfrak m}}
\newcommand{\la}{{\lambda}}
\begin{document}

\title[Lyubeznik table of $S_r$ and $CM_t$  rings]{Lyubeznik table of $S_r$ and $CM_r$ rings}

\author[J. \`{A}lvarez Montaner ]{Josep \`{A}lvarez Montaner}
\address{Departament de Matem\`atiques  and  Institut de Matem\`atiques de la UPC-BarcelonaTech (IMTech)\\  Universitat Polit\`ecnica de Catalunya \\ Av.~Diagonal 647, Barcelona
08028; and Centre de Recerca Matem\`atica (CRM), 08193 Bellaterra, Barcelona.} 
\email{josep.alvarez@upc.edu}
\thanks{ JAM is partially supported by grant  PID2019-103849GB-I00 (MCIN/AEI/10.13039/501100011033), AGAUR grant 2021 SGR 00603 and Spanish State Research Agency, through the Severo Ochoa and Maria de Maeztu Program for Centers and Units of Excellence in R$\&$D (project CEX2020-001084-M)}

\author[S. Yassemi]{Siamak Yassemi}
\address{Department of Mathematics, Purdue University, West Lafayette,
IN 47907} 
\email{syassemi@purdue.edu}

%
%
%
%
%
%
%
%
%
\begin{abstract}
We describe the shape of the Lyubeznik table of either rings in positive characteristic or Stanley-Reisner rings in any characteristic when they satisfy Serre's condition $S_r$ or they are Cohen-Macaulay in a given codimension, condition denoted by $CM_r$. Moreover we show that these results are sharp.
\end{abstract}

\maketitle

\section{Introduction}
Let $(R,\fM)$  be a regular local ring containing a field $\KK$ and set $A=R/I$, where $I$ is an ideal of $R$. It is known that some vanishing results on local cohomology modules behave similarly in either the case where $\KK$ is a field of positive characteristic or $I$ is a squarefree monomial ideal in polynomial ring in any characteristic. For example, as a consequence of work of Peskine and Szpiro \cite{PS} in positive characteristic and  Lyubeznik \cite{Lyu84} for  monomial ideals, there is only one local cohomology different from zero when $A$ is Cohen-Macaulay. 
The main reason behind this similar behaviour is that the Frobenius morphism in positive characteristic is flat by Kunz theorem \cite{Kunz} and, applying it to our ideal $I$ recursively gives us a cofinal system of ideals with respect to the system given by the usual powers which describe these local cohomology modules. For squarefree monomial ideals we have a similar flat morphism, raising any variable of the polynomial ring to its second power, that plays the same role. This point of view has already been successfully used by Singh and Walther \cite{AnuragUliPure} and  \`{A}lvarez Montaner \cite{AM15}. 

\vskip 2mm

The vanishing of local cohomology modules implies the vanishing of some Lyubeznik numbers of $A$ introduced in  \cite{Lyu93}. Indeed, using an spectral sequence argument
one may check that the Lyubeznik table of a Cohen-Macaulay ring $A$ is trivial \cite[Remark 4.2]{Alv00}. This still holds true replacing the Cohen-Macaulay property for sequentially Cohen-Macaulay \cite{AM15}. We point out that these results are no longer true when $A$ is Cohen-Macaulay containing a field of characteristic zero. Take for example the ideal generated 
by the $2\times 2$ minors of a generic $2\times 3$ matrix \cite{AL06}. 

\vskip 2mm

In this note we continue the study of Lyubeznik numbers of $A$ in either the case where $\KK$ is a field of positive characteristic or $I$ is a squarefree monomial ideal in any characteristic. The main results are Theorems \ref{thm1} and \ref{thm2} where we describe the shape of the Lyubeznik table of $A$ when we relax the Cohen-Macaulay condition on $A$ to Serre's  condition $S_r$ or being Cohen-Macaulay in codimension $r$, condition denoted by $CM_r$. 


\vskip 2mm

A priori, there is no reason for thinking that the results we obtain are sharp but this is indeed the case as shown in Example \ref{ex1}. Finally we highlight that, using results obtained by Conca and Varbaro \cite{CV20}, one may compute some apparently complicated Lyubeznik tables in positive characteristic in the event that the ring $A$ has a squarefree Gr\"obner deformation.

\section{Lyubeznik numbers}

Let $(R,\fM)$  be a regular local ring containing a field $\KK$ and $I$ an ideal of $R$. Some finiteness properties
of local cohomology modules $H^r_I(R)$ were proved by 
Huneke and Sharp  \cite{HunekeSharp} when the field $\KK$ has positive characteristic 
and Lyubeznik \cite{Lyu93}  in the characteristic zero case. 
In particular, they proved that Bass numbers of these local cohomology modules are finite.  
Relying on this fact, Lyubeznik \cite{Lyu93} introduced a set of numerical invariants of local rings containing a field as follows:

\begin{theodef}
Let $A$ be a local ring containing a field $\KK$, so that its completion $\widehat{A}$ admits a surjective
ring homomorphism $\xymatrix@1{R\ar@{->>}[r]^-{\pi}& \widehat{A}}$ from a regular local ring
$(R,\mathfrak{m})$ of dimension $n$ and set $I:=\ker (\pi)$. Then, the Bass numbers 
\[
\lambda_{p,i} (A):=\mu_p (\mathfrak{m},H_I^{n-i}(R))=\mu_0 (\mathfrak{m},H_{\mathfrak{m}}^p (H_I^{n-i}(R)))
\]
depend only on $A$, $i$ and $p$, but neither on $R$ nor on $\pi$.
\end{theodef}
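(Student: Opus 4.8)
The plan is to establish the only nontrivial point, namely that the Bass numbers $\mu_p(\fM,H_I^{n-i}(R))$ are independent of the chosen presentation; their finiteness, which makes the $\lambda_{p,i}(A)$ honest nonnegative integers, is supplied by the cited results \cite{HunekeSharp,Lyu93}. Since everything is defined through the completion, I may assume $A=\widehat{A}$ is complete. Given two presentations $\pi\colon R\twoheadrightarrow A$ and $\pi'\colon R'\twoheadrightarrow A$ with $R\cong\KK[[\xx]]$ and $R'\cong\KK[[x'_1,\dots,x'_{n'}]]$ complete regular local, I would dominate both by the completed tensor product $R''=R\,\widehat{\otimes}_{\KK}\,R'\cong R[[x'_1,\dots,x'_{n'}]]$, which is again complete regular local and carries the surjection $\pi''=(\pi,\pi')$ onto $A$ restricting to $\pi$ on $R$. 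As $\pi$ is already onto, each $x'_j$ maps to some $\pi(a_j)$, so after the automorphism $x'_j\mapsto x'_j+a_j$ we get $\kr\pi''=IR''+(x'_1,\dots,x'_{n'})$. Hence it suffices to compare the invariants attached to $(R,\pi)$ and to $(S,\pi_S)$ where $S=R[[y_1,\dots,y_c]]$, $\fM_S=\fM S+(y_1,\dots,y_c)$, $\dim S=n+c$, and $I_S=\kr\pi_S=IS+(y_1,\dots,y_c)$; applying the same comparison to $(R',\pi')\rightsquigarrow(R'',\pi'')$ then closes the loop.

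The first step is to identify the local cohomology after adjoining variables. Here $H^q_{(y)}(S)=0$ for $q\neq c$, while $E:=H^c_{(y)}(S)$ is a free $R$-module (on the inverse monomials $y^{-\alpha}$, $\alpha\in(\ZZ_{\geq 1})^c$), hence $R$-flat. Since $\Gamma_{I_S}=\Gamma_{IS}\circ\Gamma_{(y)}$, the Grothendieck spectral sequence $H^p_{IS}(H^q_{(y)}(S))\Rightarrow H^{p+q}_{I_S}(S)$ collapses to $H^{p+c}_{I_S}(S)\cong H^p_{IS}(E)$; and because $IS$ is generated by elements $f_1,\dots,f_s\in I$, the \v{C}ech complex over $S$ is the \v{C}ech complex over $R$ tensored with the flat module $E$, giving $H^p_{IS}(E)\cong H^p_I(R)\otimes_R E$. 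Taking $p=n-i$ yields $H^{(n+c)-i}_{I_S}(S)\cong H^{n-i}_I(R)\otimes_R E$; the cohomological degree has shifted by exactly $c=\dim S-\dim R$, which is precisely the shift absorbed by the normalization $n-i$ in the definition of $\lambda_{p,i}$.

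The second step, which I expect to be the main obstacle, is to show that tensoring with $E$ does not change the Bass numbers at the maximal ideal: $\mu_p(\fM_S,M\otimes_R E)=\mu_p(\fM,M)$ for $M=H^{n-i}_I(R)$. I would use the characterization $\mu_p(\fM_S,N)=\dim_{\KK}\Ext^p_S(\KK,N)$. Over the auxiliary regular local ring $T=\KK[[y_1,\dots,y_c]]$ one has $H^c_{(y)}(T)=E_T(\KK)$, the injective hull of the residue field, so that $\Ext^b_T(\KK,E_T(\KK))$ equals $\KK$ for $b=0$ and vanishes for $b>0$; moreover $E\cong R\,\widehat{\otimes}_{\KK}\,E_T(\KK)$ by flat base change. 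Resolving $\KK$ over $S$ by the tensor product of a free $R$-resolution of $\KK$ with the Koszul complex on $y_1,\dots,y_c$ (a genuine $S$-resolution because the $x$'s and $y$'s jointly form a regular sequence, so the relevant $\Tor^S_{>0}$ vanish), a Künneth-type splitting should give $\Ext^p_S(\KK,M\otimes_R E)\cong\bigoplus_{a+b=p}\Ext^a_R(\KK,M)\otimes_{\KK}\Ext^b_T(\KK,E_T(\KK))\cong\Ext^p_R(\KK,M)$. Comparing $\KK$-dimensions then gives $\mu_p(\fM,H^{n-i}_I(R))=\mu_p(\fM_S,H^{(n+c)-i}_{I_S}(S))$, so the invariants computed from $(R,\pi)$ and $(S,\pi_S)$ coincide and the proof concludes. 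The delicate points are that the $\Hom$-$\Tor$ interchange and the Künneth decomposition must be justified for the non-finitely-generated injective and \v{C}ech objects at play, and that the completed tensor product $R''$ is again regular, which may force a careful (possibly enlarged) choice of the common coefficient field $\KK$ when the residue field is imperfect.
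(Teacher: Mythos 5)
First, a remark on the comparison itself: the paper offers no proof of this statement --- it is quoted as Lyubeznik's Theorem/Definition from \cite{Lyu93} --- so your proposal can only be measured against the classical argument, whose outline you do reproduce: reduce to comparing a presentation $\pi\colon R\twoheadrightarrow\widehat{A}$ with an enlarged presentation $S=R[[y_1,\dots,y_c]]\twoheadrightarrow\widehat{A}$ whose kernel is $IS+(y_1,\dots,y_c)$, and show the Bass numbers at the maximal ideal are unchanged. Your computational core is correct: $H^q_{(y)}(S)=0$ for $q\neq c$, $E=H^c_{(y)}(S)$ is $R$-free, the degeneration gives $H^{(n+c)-i}_{I_S}(S)\cong H^{n-i}_I(R)\otimes_R E$, and $\mu_p(\fM_S,M\otimes_R E)=\mu_p(\fM,M)$. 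Moreover, the point you flag as delicate in that last step is actually harmless: writing $k$ for the residue field and $T=k[[y_1,\dots,y_c]]$, one has $M\otimes_R E\cong M\otimes_k E_T(k)$ with the $x$'s acting through $M$ and the $y$'s through $E_T(k)$, and $k$ admits the \emph{finite free} Koszul resolution $K_\bullet(x,y;S)=K_\bullet(x;S)\otimes_S K_\bullet(y;S)$ over $S$; hence $\Hom_S(K_\bullet(x,y;S),M\otimes_R E)$ splits as a tensor product, over the field $k$, of two Koszul cochain complexes, and K\"unneth over a field has no correction terms, so no interchange for non-finitely-generated modules is ever needed.

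The genuine gap is the domination step, which is precisely the subtle point of Lyubeznik's theorem. As written, $R''=R\,\widehat{\otimes}_{\KK}\,R'$ does not work, for two concrete reasons. (i) For $\pi''=(\pi,\pi')$ to be well defined on a tensor product over $\KK$ you need $\pi|_{\KK}=\pi'|_{\KK}$ for the chosen embeddings $\KK\subseteq R$, $\KK\subseteq R'$; but the identifications $R\cong\KK[[x]]$, $R'\cong\KK[[x']]$ involve \emph{choices} of coefficient fields, and two surjections onto $\widehat{A}$ induce, in general, incompatible coefficient fields of $\widehat{A}$. This is not an imperfect-residue-field phenomenon, so ``enlarging $\KK$'' is not the cure: already in characteristic zero, $\mathbb{Q}(s)$ and $\mathbb{Q}(s+t)$ are two distinct coefficient fields of $\mathbb{Q}(s)[[t]]$. (ii) If instead you shrink $\KK$ so that compatibility is automatic (say $\KK$ is the prime field), then $R''$ is in general not even Noetherian: $\mathbb{C}[[x]]\,\widehat{\otimes}_{\mathbb{Q}}\,\mathbb{C}[[x']]\cong(\mathbb{C}\otimes_{\mathbb{Q}}\mathbb{C})[[x,x']]$ and $\mathbb{C}\otimes_{\mathbb{Q}}\mathbb{C}$ is not Noetherian, so your identification $R''\cong R[[x'_1,\dots,x'_{n'}]]$ fails. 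The standard repairs are: (a) fix one coefficient field $\widetilde{k}\subseteq\widehat{A}$ and lift it along both $\pi$ and $\pi'$ --- possible because $R,R'$ are complete and $\widetilde{k}$ is separable, hence formally smooth, over its prime field --- and only then form $R\,\widehat{\otimes}_{\widetilde{k}}\,R'$, which with these compatible liftings really is a power series ring and carries a well-defined $(\pi,\pi')$; or (b) avoid tensor products altogether: choose a complete regular local ring $C$ surjecting onto the fiber product $R\times_{\widehat{A}}R'$ (which is Noetherian, local and complete), obtaining surjections $C\to R$ and $C\to R'$ compatible with the maps to $\widehat{A}$, and check that any surjection between complete regular local rings over $\widehat{A}$ is, up to isomorphism, of your ``adjoin variables and kill them'' form. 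Either way, the missing ingredient is Cohen's structure theory for coefficient fields, not bookkeeping. (A minor further omission: the statement allows an arbitrary, not necessarily complete, regular local $R$, so your reduction to $R\cong\KK[[x]]$ also needs the easy flat-base-change comparison of $R$ with $\widehat{R}$.)
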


We refer to these invariants as \emph{Lyubeznik numbers} and they are known to
 satisfy the following properties: $\la_{p,i}(A)=0$ if $i>d$,  $\la_{p,i}(A)=0$ if $p>i$ and $\la_{d,d}(A)\neq 0$,
 where $d=\dm A$. Therefore we can collect them in
the so-called {\it Lyubeznik table}:
$$\Lambda(A)  = \left(
                    \begin{array}{ccc}
                      \la_{0,0} & \cdots & \la_{0,d}  \\
                       & \ddots & \vdots \\
                       &  & \la_{d,d} \\
                    \end{array}
                  \right).
$$

We say that the Lyubeznik table is {\it trivial} if $\la_{d,d}(A)=1$ and $\la_{p,i}(A)=0$ for $p$ and $i$ different from $d$.
The {\it highest} Lyubeznik number  $\la_{d,d}(A)$ has an interpretation in terms of the dual graph $\Gamma_1(A)$, also known as Hochster-Huneke graph, associated to ${\it Spec}(A)$.

\begin{definition}
Let $A$ be a ring of dimension $d$ and let $t$ be an integer such that $0\leq t \leq d$. We define the graph $\Gamma_t(A)$ as a simple graph whose vertices are the minimal primes of $A$ and there is an edge between $\mathfrak{p}$ and $\mathfrak{q}$ distinct minimal primes if and only if 
$ht(\mathfrak{p} +\mathfrak{q}) \leq t$.
\end{definition}

Zhang gave the following characterization.

\begin{theorem}\cite[Main Theorem]{Zha07} 
Let $A$ be a complete local ring with separably closed residue field. Then:
$$\lambda_{d,d}(A) = \# \Gamma_1(A)$$ 
\end{theorem}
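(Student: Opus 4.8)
The plan is to reinterpret the target as a socle dimension and then compute it by a Mayer--Vietoris reduction to the connected case. By the Theorem/Definition above, $\lambda_{d,d}(A)=\mu_0(\mathfrak{m},H^d_{\mathfrak{m}}(H^{n-d}_I(R)))=\dim_k\operatorname{Hom}_R(k,H^d_{\mathfrak{m}}(H^{n-d}_I(R)))$, where $k$ denotes the (separably closed) residue field; that is, it is the number of copies of the injective hull $E=E_R(k)$ occurring in the $\mathfrak{m}$-torsion module $H^d_{\mathfrak{m}}(H^{n-d}_I(R))$. Since local cohomology depends only on the radical, I would first replace $I$ by $\sqrt I=\mathfrak{p}_1\cap\dots\cap\mathfrak{p}_s$, where $\mathfrak{p}_1,\dots,\mathfrak{p}_s$ are the minimal primes of $A$ --- exactly the vertices of $\Gamma_1(A)$ --- and work throughout with the reduced ring, which is equidimensional of dimension $d$ (the standing equidimensionality is what makes the height condition $\operatorname{ht}(\mathfrak{p}+\mathfrak{q})\le 1$ record codimension-one adjacency); the invariant is unchanged.

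The key structural step is an additivity statement: if $\sqrt I=J_1\cap J_2$ with $\operatorname{ht}(J_1+J_2)\ge 2$ in $A$, equivalently $\dim R/(J_1+J_2)\le d-2$, then $\lambda_{d,d}(A)=\lambda_{d,d}(R/J_1)+\lambda_{d,d}(R/J_2)$. I would deduce this from the Mayer--Vietoris sequence $\cdots\to H^{j}_{J_1+J_2}(R)\to H^{j}_{J_1}(R)\oplus H^{j}_{J_2}(R)\to H^{j}_{\sqrt I}(R)\to H^{j+1}_{J_1+J_2}(R)\to\cdots$: since $R$ is regular, $\operatorname{grade}(J_1+J_2)=\operatorname{ht}(J_1+J_2)\ge n-d+2$, so Grothendieck vanishing kills both $H^{n-d}_{J_1+J_2}(R)$ and $H^{n-d+1}_{J_1+J_2}(R)$, whence $H^{n-d}_{\sqrt I}(R)\cong H^{n-d}_{J_1}(R)\oplus H^{n-d}_{J_2}(R)$; applying $H^d_{\mathfrak{m}}(-)$ and passing to socles gives the claim. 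Now partition the vertices $\mathfrak{p}_i$ into the connected components of $\Gamma_1(A)$ and let $J_\ell$ be the intersection of the primes in the $\ell$-th component. By construction distinct components are separated in codimension $\ge 2$, so iterating the additivity reduces the theorem to the case in which $\Gamma_1(A)$ is connected, where the expected value is $1$.

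It remains to show that if $\Gamma_1(A)$ is connected then $\lambda_{d,d}(A)=1$; since $\lambda_{d,d}(A)\ge 1$ always, this means proving that the socle of $H^d_{\mathfrak{m}}(H^{n-d}_{\sqrt I}(R))$ is one-dimensional. The base case $s=1$ is that a complete local domain $R/\mathfrak{p}$ has $\lambda_{d,d}=1$, which follows from Hartshorne--Lichtenbaum vanishing and the structure of $H^{n-d}_{\mathfrak{p}}(R)$ at the top. For $s>1$ I would strip off a non-cut vertex $\mathfrak{p}_s$ (a leaf of a spanning tree of $\Gamma_1$), set $J=\mathfrak{p}_1\cap\dots\cap\mathfrak{p}_{s-1}$, and run Mayer--Vietoris for $\sqrt I=J\cap\mathfrak{p}_s$. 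Here $\operatorname{ht}(J+\mathfrak{p}_s)=1$ in $A$, so the term $H^{n-d+1}_{J+\mathfrak{p}_s}(R)$ no longer vanishes and the cohomology does not simply split; the point is that connectedness forces the a priori extra copy of $E$ to be cancelled by the connecting map, keeping the socle one-dimensional.

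The main obstacle is precisely this cancellation in the connected case: it is governed by the second vanishing theorem for local cohomology (Ogus in characteristic zero, Peskine--Szpiro/Hartshorne--Speiser and Huneke--Lyubeznik in positive characteristic), which uses both completeness and the separably closed residue field to tie the vanishing of $H^{n-1}$-type local cohomology to connectedness of the punctured spectrum. Making the Mayer--Vietoris bookkeeping compatible with this vanishing --- i.e.\ checking that the connecting maps carry no surviving socle beyond the single expected copy, and that no higher differentials or extension problems inflate $\dim_k\operatorname{Hom}_R(k,-)$ --- is the technical heart of the argument. Once it is in place, induction on the number of vertices yields $\lambda_{d,d}(A)=1$ in the connected case, and combined with the additivity step this gives $\lambda_{d,d}(A)=\#\Gamma_1(A)$ in general.
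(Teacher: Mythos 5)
The paper itself gives no proof of this statement---it is quoted directly from Zhang \cite{Zha07}---so your proposal has to stand entirely on its own, and it does not. The reduction framework is sound: passing to $\sqrt{I}$, the additivity $\lambda_{d,d}(A)=\lambda_{d,d}(R/J_1)+\lambda_{d,d}(R/J_2)$ when the two groups of minimal primes satisfy $\mathrm{ht}_A(J_1+J_2)\geq 2$ is correct (the splitting $H^{n-d}_{\sqrt{I}}(R)\cong H^{n-d}_{J_1}(R)\oplus H^{n-d}_{J_2}(R)$ follows as you say, though the vanishing used is the grade bound $H^j_{\mathfrak{a}}(R)=0$ for $j<\mathrm{grade}\,(\mathfrak{a})$, not ``Grothendieck vanishing,'' which is the vanishing \emph{above} the dimension), and this does reduce the theorem to the case where $\Gamma_1(A)$ is connected.

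The genuine gap is that the connected case---which is the entire content of Zhang's theorem---is asserted rather than proved. First, the base case does not ``follow from Hartshorne--Lichtenbaum vanishing'': HLV controls $H^n_I(R)$, whereas $\lambda_{d,d}$ is the socle dimension of $H^d_{\mathfrak{m}}(H^{n-d}_I(R))$, and the fact that a complete local domain with separably closed residue field has $\lambda_{d,d}=1$ is itself a substantial theorem (Lyubeznik proved it in characteristic $p$ via Frobenius arguments; the characteristic-zero case is precisely part of what \cite{Zha07} establishes). Second, in the inductive step, Mayer--Vietoris plus $H^d_{\mathfrak{m}}(C)=0$ for the cokernel $C$ (supported on the codimension-one overlap) gives a surjection $E\oplus E\twoheadrightarrow H^d_{\mathfrak{m}}(H^{n-d}_{\sqrt{I}}(R))$, hence $\lambda_{d,d}(A)\in\{1,2\}$; but showing that the presence of an edge forces the connecting map $H^{d-1}_{\mathfrak{m}}(C)\to E\oplus E$ to be nonzero---so that exactly one copy of $E$ dies---is the heart of the matter, and merely naming the Second Vanishing Theorem does not supply that argument: one needs the injectivity of these modules (Huneke--Sharp/Lyubeznik structure theory), an analysis of $C$ inside $H^{n-d+1}_{J+\mathfrak{p}_s}(R)$, and an induction that actually threads the separably-closed and completeness hypotheses through SVT. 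Your own phrase ``the technical heart'' marks exactly what is missing. Finally, the ``standing equidimensionality'' you invoke is not a hypothesis of the statement, and this matters: for non-equidimensional $A$ the formula with \emph{all} minimal primes as vertices is false (e.g.\ $A=k[[x,y,z]]/((x)\cap(y,z))$ has $\lambda_{2,2}=1$ while $\Gamma_1(A)$ consists of two isolated vertices), so a complete proof must either read the statement with the Hochster--Huneke convention (vertices only the minimal primes of dimension $d$) or additionally show that lower-dimensional components contribute nothing to $\lambda_{d,d}$; your write-up does neither.
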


\begin{remark}
More generally $\lambda_{d,d}(A) = \# \Gamma_1(B)$ where $B=\widehat{\widehat{A}^{\rm sh}}$ is the completion of the strict henselianization of the completion of $A$.
\end{remark}

We point out that Kawasaki already proved in \cite[Theorem 2]{Kaw02}  that the highest Lyubeznik number $\lambda_{d,d}$ 
of a Cohen-Macaulay ring (or even $S_2$) is always one. Other Lyubeznik numbers can be described from the graphs $\Gamma_t(A)$  as shown by Walther \cite{Wal01} and  N\'u\~nez-Betancourt, Spiroff and Witt \cite{NBSW19}. Moreover, Walther describe the possible Lyubeznik tables for $d\leq 2$ (see also \cite{RWZ22} for other small dimensional cases).

\begin{proposition}
Let $A$ be an equidimensional complete local ring  of dimension $\geq 3$ with separably closed residue field. Then

\begin{itemize}
\item[(i)] \cite[Proposition 2.2]{Wal01} $\lambda_{0,1} (A)= \# \Gamma_{d-1}(A) -1$.

\item[(ii)] \cite[Theorem 5.4 (1)]{NBSW19} $\lambda_{1,2} (A) = \# \Gamma_{d-2}(A) - \# \Gamma_{d-1}(A)$.

\item[(iii)] \cite[Theorem 5.4 (2)]{NBSW19} $\lambda_{i,i+1} (A) \geq \# \Gamma_{d-i-1}(A) - \# \Gamma_{d-i}(A)$ for $1\leq i \leq d-2$.
\end{itemize}

\end{proposition}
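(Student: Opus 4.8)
The plan is to compute the modules $\Hni$ through the Mayer--Vietoris (\v{C}ech-on-minimal-primes) spectral sequence and then read off the Bass numbers $\mu_p(\fM,\Hni)$ from the height data $\hlt(\fp+\fq)$ defining the graphs $\Gamma_t(A)$. Write $\sqrt{I}=\fp_1\cap\cdots\cap\fp_s$, with $\fp_1,\dots,\fp_s$ the minimal primes of $A$; since passing to the radical does not affect the relevant local cohomology, I may assume $A$ reduced. The decomposition gives a spectral sequence
\[
E_1^{a,b}=\bigoplus_{\lvert\sigma\rvert=a+1} H^{b}_{\fp_\sigma}(R)\ \Longrightarrow\ H^{a+b}_I(R),\qquad \fp_\sigma=\textstyle\sum_{j\in\sigma}\fp_j .
\]
Because $H^b_{\fp_\sigma}(R)=0$ for $b<\hlt(\fp_\sigma)$ and $R$ is regular, only simplices $\sigma$ whose sum has small height survive into a fixed total degree; equidimensionality of $A$ (all $\fp_j$ of height $n-d$) makes this height filtration clean, and the edge condition $\hlt(\fp_a+\fp_b)\le t$ is exactly what governs which $1$-simplices contribute at the threshold recorded by $\Gamma_t(A)$. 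The second ingredient is the composition spectral sequence $H^p_\fM(H^q_I(R))\Rightarrow H^{p+q}_\fM(R)$, whose abutment is $\E{\fM}$ when $p+q=n$ and $0$ otherwise; this both extracts the Bass numbers $\lambda_{p,i}=\mu_0(\fM,H^p_\fM(\Hni))$ and rigidly constrains them.

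First I would treat the first column. Here $\lambda_{0,i}$ counts copies of $\E{\fM}$ in the $\fM$-torsion $\Gamma_\fM(\Hni)$, which is \emph{not} produced by a single top piece $H^b_{\fp_\sigma}(R)$ but by the cohomology of the height-truncated nerve; this is precisely why connectedness, rather than a raw count of simplices, enters. For $i=1$, the low-degree corner of the double complex collapses to the simplicial cochain complex of the graph whose edges record $\hlt(\fp_a+\fp_b)\le d-1$, and the restriction differentials identify $\mu_0(\fM,H^{n-1}_I(R))$ with the reduced $0$-cochain cohomology of that graph, of dimension $\#\Gamma_{d-1}(A)-1$, giving (i). The Hartshorne--Lichtenbaum and Ogus connectedness theorems supply the anchoring base case, namely $H^{n-1}_I(R)=0\Leftrightarrow\#\Gamma_{d-1}(A)=1$.

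For (ii) the same machine is pushed one step deeper: $\lambda_{1,2}=\mu_0(\fM,H^1_\fM(H^{n-2}_I(R)))$ now receives contributions from pairs $\fp_a+\fp_b$ with $\hlt\le d-2$ together with the triple overlaps, and the alternating structure of the differentials turns the count into the difference $\#\Gamma_{d-2}(A)-\#\Gamma_{d-1}(A)$ between component numbers at two consecutive thresholds. The delicate point is that in this bidegree no higher differential can interfere and no nontrivial extension arises when one reassembles $H^1_\fM(H^{n-2}_I(R))$, so one gets genuine equality; verifying this vanishing is the technical heart of the argument, and it is here that the separably closed residue field is used, guaranteeing that each surviving $E_\infty$-term contributes exactly one copy of the injective hull and ruling out arithmetic obstructions to connectedness.

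Finally, for (iii) with $1\le i\le d-2$ the analogous computation produces $\#\Gamma_{d-i-1}(A)-\#\Gamma_{d-i}(A)$ as the dimension of the image of the relevant differential inside $H^i_\fM(\Hni)$, but now higher differentials of the spectral sequence, as well as contributions from simplices of dimension $>2$, may inject further copies of $\E{\fM}$ into the same bidegree. Since these can only increase $\mu_0(\fM,H^i_\fM(\Hni))$, one obtains the inequality $\lambda_{i,i+1}(A)\ge \#\Gamma_{d-i-1}(A)-\#\Gamma_{d-i}(A)$ rather than an equality. The main obstacle throughout is thus the control of these differentials: proving their vanishing in the low-degree corner for (i) and (ii), and merely bounding their contribution for (iii); the equidimensionality and separably closed residue field hypotheses are exactly what is needed to keep the bookkeeping of heights and of single copies of $\E{\fM}$ exact.
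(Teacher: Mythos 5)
Note first that the paper itself does not prove this proposition: it is quoted from Walther \cite{Wal01} and N\'u\~nez-Betancourt--Spiroff--Witt \cite{NBSW19}, so your attempt has to stand on its own. For part (i) your plan is essentially workable and close to Walther's actual argument: the single-prime column $\bigoplus_j H^{n-1}_{\mathfrak{p}_j}(R)$ of the Mayer--Vietoris spectral sequence vanishes by the Huneke--Lyubeznik second vanishing theorem (each $R/\mathfrak{p}_j$ is a complete domain of dimension $d\geq 3$ with separably closed residue field), the degree-$n$ terms are governed by Hartshorne--Lichtenbaum, and the surviving combinatorial complex (indexed, note, by the subsets $\sigma$ with $\mathfrak{m}$-primary sum, i.e.\ by the \emph{non}-edges and non-faces of $\Gamma_{d-1}(A)$, not by its edges as you state) has cohomology $E^{c-1}$ with $c=\#\Gamma_{d-1}(A)$. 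In particular the separably closed hypothesis enters as the engine that kills an entire column, not merely as an ``anchoring base case''.

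Parts (ii) and (iii) contain a genuine gap. The graphs $\Gamma_t(A)$ record only the heights of the pairwise sums $\mathfrak{p}_j+\mathfrak{p}_k$, but the $E_1$-terms of your spectral sequence in total degree $n-2$ include $\bigoplus_j H^{n-2}_{\mathfrak{p}_j}(R)$, for which no vanishing or connectedness theorem exists (there is no ``third vanishing theorem''), and $\bigoplus_{j<k} H^{n-1}_{\mathfrak{p}_j+\mathfrak{p}_k}(R)$, which the second vanishing theorem controls only through the connectedness of the punctured spectrum of $R/(\mathfrak{p}_j+\mathfrak{p}_k)$: for instance, when $\dim R/(\mathfrak{p}_j+\mathfrak{p}_k)=1$ this module is a direct sum of copies of the injective hull indexed by the minimal primes of $\mathfrak{p}_j+\mathfrak{p}_k$, a number invisible to every $\Gamma_t(A)$. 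Hence there is no mechanism behind your assertion that ``the alternating structure of the differentials turns the count into $\#\Gamma_{d-2}(A)-\#\Gamma_{d-1}(A)$''; that all these extra contributions cancel is precisely the content of \cite{NBSW19}, not an outcome of the bookkeeping. The gap is starkest in the simplest instance: for $A=R/\mathfrak{p}$ a complete domain of dimension $\geq 3$, part (ii) asserts $\lambda_{1,2}(A)=\mu_1(\mathfrak{m},H^{n-2}_{\mathfrak{p}}(R))=0$, yet your Mayer--Vietoris complex is trivial (one vertex, no pairs, no triples), and the Grothendieck spectral sequence does not force this vanishing, since $E_2^{1,n-2}$ may a priori be killed by differentials into $E_r^{1+r,\,n-1-r}$ rather than being zero; proving that it actually vanishes is the technical heart of \cite{NBSW19} and needs input beyond the two spectral sequences you invoke. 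The same objection applies to (iii), where in addition the inequality cannot follow from the claim that extra terms ``can only increase'' the Bass number: differentials in a spectral sequence can kill classes as well as obstruct them, so a lower bound requires showing that specific classes survive, which your sketch does not do.
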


\section{Lyubeznik tables of $S_r$ and $CM_r$ rings}

Throughout this section we will always assume that $(R,\mathfrak{m})$  is a regular local ring and 
$A$ is  a  complete local ring containing a field that admits a presentation $A= R/I$ where $I \subseteq R$ is an ideal. 
We will study the Lyubeznik table when we relax the Cohen-Macaulay condition on the ring $A$. The classical way of doing so is by means of Serre's conditions. Another way is by asking for being Cohen-Macaulay up to some codimension. This notion has been considered by Miller, Novik and Schwarz \cite{MNS11}. It was further developed, in the case that $A$ is equidimensional and defined by a squarefree monomial ideal, in \cite{HYZN12, HYZN12, PPTY22}.

\begin{definition}
We say:
\begin{itemize}
\item[(i)] $A$ satisfies Serre's condition $S_r$ if 
$${\rm depth} \; A_{\mathfrak{p}} \geq {\rm min} \{ r, \dim A_{\mathfrak{p}}\},$$ for all $ \mathfrak{p} \in {\rm Spec}(R)$.

\vskip 2mm

\item[(ii)] $A$  satisfies the condition $CM_r$ if it  is Cohen-Macaulay in codimension $r$, that is $A_{\mathfrak{p}}$ is Cohen-Macaulay for all $ \mathfrak{p} \in {\rm Spec}(R)$ with ht $\mathfrak{p}\leq d-r$.
\end{itemize}
\end{definition}


\begin{remark}
Schenzel \cite{Sch79} proved that if $A$ satisfies $S_r$ with $r\geq 2$ then it is equidimensional.  However, we may have non-equidimensional $CM_r$ rings (see Example \ref{ex1}). 
\end{remark}

Both the $S_r$ and $CM_r$ conditions can be characterized in terms of the {\it deficiency modules}
$$K^i_A:=\Ext^{n-i}_R(A,R).$$
The following result can be found  in the the work of Schenzel \cite[Lemma 3.2.1]{Sch82}  (see also \cite[Remark 2.9]{CV20}). For the squarefree monomial ideals case one may consult \cite{PPTY22}.

\begin{proposition} We have:

\begin{itemize}
\item[(i)] $A$ is $S_r$, $r\geq 2$, if and only if $\dim K^{i}_A \leq i-r$ for all $1\leq i \leq d$.
\item[(ii)] $A$ is $CM_r$ if and only if $\dim K^{i}_A \leq r$ for all $1\leq i \leq d$.
\end{itemize}
\end{proposition}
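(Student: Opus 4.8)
The engine of the proof is local duality over the regular---hence Gorenstein---local ring $(R,\fM)$, together with the fact that it is compatible with localization. Since the canonical module of $R$ is $R$ itself, for every prime $\fp\supseteq I$ the ring $R_{\fp}$ is again regular local, of dimension $\hlt\fp=n-\dim R/\fp$, and local duality over $R_{\fp}$ supplies a natural isomorphism
\[
(K^i_A)_{\fp}=\Ext^{\,n-i}_{R_{\fp}}(A_{\fp},R_{\fp})\cong H^{\,i-\dim R/\fp}_{\fp A_{\fp}}(A_{\fp})^{\vee},
\]
where $(-)^{\vee}$ is Matlis duality over $R_{\fp}$. The first task is to pin this identity down, checking that the shift in cohomological degree is exactly $\dim R/\fp$; this is where the equidimensionality and catenarity of the regular ring $R$ (so that $\hlt\fp+\dim R/\fp=n$) are used.

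Because Matlis duality is faithful, the displayed isomorphism identifies the support of $K^i_A$ with the locus where the corresponding local cohomology of $A_{\fp}$ is non-zero:
\[
\Supp K^i_A=\{\,\fp\supseteq I\;:\;H^{\,i-\dim R/\fp}_{\fp A_{\fp}}(A_{\fp})\neq 0\,\}.
\]
Grothendieck's vanishing and non-vanishing theorems then sandwich this support inside the band $\depth A_{\fp}\le i-\dim R/\fp\le\dim A_{\fp}$, with non-vanishing guaranteed at the two extreme degrees $j=\depth A_{\fp}$ and $j=\dim A_{\fp}$. This two-sided control is the heart of the matter: the inclusion into the band bounds $\dim K^i_A$ from above, while the guaranteed non-vanishing at $j=\depth A_{\fp}$ produces, whenever needed, an explicit prime realizing a prescribed value of $\dim R/\fp$ in $\Supp K^i_A$.

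For part (i) I would argue through the failure of the two conditions. Assume first that $A$ is equidimensional and catenary, so $\dim A_{\fp}+\dim R/\fp=d$ for all $\fp$; then for $i<d$ the upper end of the band is automatically strict, and a prime of $\Supp K^i_A$ is then a non-Cohen-Macaulay point satisfying $\depth A_{\fp}\le i-\dim R/\fp$. A direct computation shows that $\dim K^i_A>i-r$ for some $i<d$ is equivalent to the existence of a prime with $\depth A_{\fp}<r$ and $\depth A_{\fp}<\dim A_{\fp}$, which is exactly the negation of $S_r$. The standing equidimensionality hypothesis is then removed at no cost: it is forced by the inequalities, since a minimal prime $\fp$ with $\dim R/\fp=c<d$ has $\depth A_{\fp}=\dim A_{\fp}=0$, hence lies in $\Supp K^{c}_A$ and yields $\dim K^{c}_A\ge c>c-r$, contradicting the bound; in the opposite direction one instead invokes the equidimensionality of $S_r$ rings ($r\ge 2$) recorded in the Remark above. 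Note that the top module $K^{d}_A$, the canonical module, always has dimension $d$ and so is excluded from the range of $i$.

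For part (ii) the same description of $\Supp K^i_A$ is the starting point, but since $CM_r$ rings need not be equidimensional the upper end of the band can no longer be ignored and must be tracked prime by prime. Here $\dim K^i_A\le r$ for the relevant $i$ has to be matched directly against the localization-wise Cohen-Macaulay condition: a prime contributes to some $K^i_A$ below its top degree exactly when $A_{\fp}$ fails to be Cohen-Macaulay, and one must verify that the largest $\dim R/\fp$ so produced is the bound dictated by $CM_r$. The main obstacle, and the step I expect to demand the most care, is precisely this non-equidimensional bookkeeping: separating the contribution of the top deficiency module $K^{d}_A$ from the lower ones, and checking that the degree shift $i-\dim R/\fp$ interacts correctly with primes sitting on components of differing dimension---the very phenomenon displayed by the non-equidimensional $CM_r$ examples, where a low-dimensional component forces non-Cohen-Macaulay behaviour, and hence a large deficiency module, along its intersection with the larger components.
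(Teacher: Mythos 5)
The paper itself gives no proof of this Proposition: it is quoted from Schenzel \cite[Lemma 3.2.1]{Sch82}, \cite[Remark 2.9]{CV20} and \cite{PPTY22}. So your attempt can only be measured against the standard argument in those sources, and that is exactly the argument you set up: localize, identify $(K^i_A)_{\fp}$ with a Matlis dual of $H^{i-\dim R/\fp}_{\fp A_{\fp}}(A_{\fp})$ by local duality over the Gorenstein ring $R_{\fp}$, and play Grothendieck's vanishing and non-vanishing bounds against the Serre conditions. Your part (i) is correct and essentially complete along these lines: the support description, the strictness of the upper end of the band when $i<d$, the equivalence in the equidimensional case, and the two-way treatment of equidimensionality (forced by the inequalities via minimal primes of small components in one direction, supplied by Schenzel's theorem for $S_r$, $r\geq 2$, in the other) all check out, and you are right that $i=d$ must be excluded since $\dim K^d_A=d$. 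One endpoint deserves more care than you (or the paper's stated range $1\leq i\leq d$) give it: the converse also needs $i=0$, since when $\depth A=0<d$ the failure of $S_r$ may be recorded \emph{only} by $K^0_A\neq 0$; for instance $I=x\fM$ in $\KK[[x,y,z]]$ has $K^1_A=0$ but is not even $S_1$.

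Part (ii), by contrast, is a plan rather than a proof, and the step you defer (``one must verify that the largest $\dim R/\fp$ so produced is the bound dictated by $CM_r$'') is precisely where the claim is genuinely problematic, not mere bookkeeping. Your guiding dichotomy --- a prime contributes below its top degree if and only if $A_{\fp}$ is not Cohen--Macaulay --- cannot control $\dim K^i_A$, because Cohen--Macaulay points on low-dimensional components contribute \emph{at} top degree: a minimal prime $\fp$ of a component of dimension $c<d$ has $A_{\fp}$ Artinian, hence lies in $\Supp K^c_A$ and forces $\dim K^c_A\geq c$. This is fatal for the statement read literally with the paper's definition: for $I=(x_1,x_2)\cap(x_3,x_4,x_5)$ in $R=\KK[[x_1,\dots,x_5]]$ one has $d=3$, the two components are regular and meet only at the closed point, so $A_{\fp}$ is Cohen--Macaulay at every nonmaximal prime and $A$ satisfies the $CM_1$ criterion (``${\rm ht}\,\fp\leq d-r$''), yet $\dim K^2_A=2>1$ with $2<d$. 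Hence the deferred verification cannot succeed in the generality you (and the paper) allow: one must either assume $A$ equidimensional --- in which case the band analysis of part (i) does close the argument, since then $\max_{i<d}\dim K^i_A$ equals the dimension of the non-Cohen--Macaulay locus, modulo fixing the off-by-one between ``${\rm ht}\,\fp\leq d-r$'' and ``$\dim K^i_A\leq r$'' --- or work with a notion of $CM_r$ that also bounds the dimensions of the small components. Your sketch senses the danger but locates it in the wrong place (non-Cohen--Macaulay behaviour along intersections of components, which your dichotomy would catch) and never resolves it.
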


\vskip 2mm

Next we present the main results of the paper where the shape of the Lyubeznik tables is given in terms of the $S_r$ and the $CM_r$ conditions.

\vskip 2mm

\begin{theorem} \label{thm1}
 Assume that $r\geq 2$ and either that:
\begin{itemize}
\item[$\bullet$] $A$ is $S_r$ and contains a field of positive characteristic, or
\item[$\bullet$] $A$ is $S_r$ and $I$ is a squarefree monomial ideal.
\end{itemize}
Then, the Lyubeznik table of $A$  satisfies
$\lambda_{i,i}= \lambda_{i,i+1}= \cdots =\lambda_{i,i+(r-1)}=0,$ for $ i\in\{0,\dots, d-1\}.$
\end{theorem}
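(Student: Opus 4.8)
The plan is to prove the equivalent statement that $\lambda_{p,j}(A)=0$ whenever $0\le j-p\le r-1$ and $p\le d-1$; since the Lyubeznik table is upper triangular this is exactly the band of $r$ entries starting on the main diagonal in each of the first $d$ rows. Everything will be reduced to the dimension bound on the deficiency modules provided by the Proposition characterizing $S_r$, namely $\dm K^{j}_A\le j-r$ for $j<d$, transported through the flat endomorphism available in each of the two settings.

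The main engine dispatches all columns $j<d$. In positive characteristic, Peskine--Szpiro together with Kunz's flatness of Frobenius give $\lc{n-j}=\varinjlim_e \Ext^{n-j}_R(R/I^{[p^e]},R)$, and flat base change along $F^e$ identifies each term with $F^e_{*}K^{j}_A$; in the squarefree monomial case the same computation holds with the finite flat endomorphism $x_i\mapsto x_i^2$ and its iterates in place of Frobenius. Since $\spec$ of such an endomorphism is a homeomorphism we have $\dm F^e_{*}K^{j}_A=\dm K^{j}_A\le j-r$ for $j<d$. Local cohomology commutes with the filtered colimit and vanishes above the dimension of the module, so
\[
H^p_{\mathfrak{m}}(\lc{n-j})=\varinjlim_e H^p_{\mathfrak{m}}(F^e_{*}K^{j}_A)=0\qquad\text{for } p>j-r.
\]
Hence $\lambda_{p,j}=\mu_0(\mathfrak{m},H^p_{\mathfrak{m}}(\lc{n-j}))=0$ for $p>j-r$, which is precisely the full band in every column $j<d$.

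The genuine obstacle is the last column $j=d$, where $K^{d}_A$ is the canonical module, of full dimension $d$, so the termwise dimension bound fails. Here I would bring in the Grothendieck composite-functor spectral sequence $E_2^{p,q}=H^p_{\mathfrak{m}}(H^q_I(R))\Rightarrow H^{p+q}_{\mathfrak{m}}(R)$, whose abutment is $\E{\fM}$ in total degree $n$ and $0$ otherwise. Applying $\mu_0(\mathfrak{m},-)$ (additive on sums of $\E{\fM}$), I analyze the targeted entries $\lambda_{p,d}$ for $d-r+1\le p\le d-1$: all outgoing differentials land in columns $>d$, where $H^q_I(R)=0$ for $q<\hlt I$, while incoming differentials originate from positions $(p-\rho,d-\rho+1)$ lying in columns $<d$, which by the previous paragraph are nonzero only when $p\le d-r+1$. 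Consequently, for $d-r+2\le p\le d-1$ the entry survives to $E_\infty$ in a total degree $\neq n$ and therefore vanishes.

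This leaves the single boundary entry $\lambda_{d-r+1,d}$, which I expect to be the main difficulty, since here a differential from the boundary term $(d-r+1-\rho,\,d-\rho+1)$ is permitted and convergence alone is inconclusive. I would resolve it using the unit $F$-module (respectively $\ZZ^n$-graded) structure of the lowest local cohomology $\lc{n-d}=\varinjlim_e F^e_{*}K^{d}_A$: the biduality spectral sequence $\Ext^s_R(K^{\bullet}_A,R)\Rightarrow A$ forces $\Ext^{n-a}_R(K^{d}_A,R)=0$ only for $d-r+2\le a\le d-1$, so $H^{d-r+1}_{\mathfrak{m}}(K^{d}_A)$ need not vanish; the point is rather that its socle is not stable under the structure morphism $K^{d}_A\to F_{*}K^{d}_A$, hence contributes nothing to $\mu_0(\mathfrak{m},\varinjlim_e F^e_{*}H^{d-r+1}_{\mathfrak{m}}(K^{d}_A))$. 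The monomial case should be the most transparent here, as the structure map acts explicitly on the finitely many $\ZZ^n$-graded components, and I would treat the two settings in parallel throughout.
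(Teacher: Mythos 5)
Your handling of the columns $i\le d-1$ is correct, and it is essentially the paper's argument in a more self-contained form: both rest on Schenzel's bound $\dim K^i_A\le i-r$ (valid only for $i<d$, as you correctly note) and on transferring that bound to $H^{n-i}_I(R)$. The paper does the transfer by citing Huneke--Sharp's containment $\mathrm{Ass}\,H^{n-i}_I(R)\subseteq \mathrm{Ass}\,K^i_A$ in positive characteristic (resp.\ Yanagawa's identification of $H^{n-i}_I(R)$ with the squarefree module $K^i_A$), and then finishes with the inequality $\mathrm{id}_R\,H^{n-i}_I(R)\le \dim H^{n-i}_I(R)$; you instead realize $H^{n-i}_I(R)$ as a direct limit of Frobenius (resp.\ $x_j\mapsto x_j^2$) base changes of $K^i_A$ and finish with Grothendieck vanishing applied to $H^p_{\mathfrak{m}}(-)$. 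Both routes are sound. Your spectral sequence observation that in addition $\lambda_{p,d}=0$ for $d-r+2\le p\le d-1$ is also correct, and goes beyond what the paper's proof establishes.

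The proposal fails, however, exactly where you flagged the ``main difficulty'': the entry $\lambda_{d-r+1,d}$. The problem is not that your sketch (socle instability under the structure morphism, biduality) is incomplete --- it is that the claim is false, so no argument can close it. The paper's own Example \ref{ex4} is a counterexample: there $R/I$ is $S_2$ and $d=4$, yet $\lambda_{3,4}=\lambda_{d-r+1,d}=7\neq 0$. The correct reading of Theorem \ref{thm1}, the one its proof actually delivers and the one consistent with the paper's examples, is that the vanishing band is confined to columns $0,\dots,d-1$: that is, $\lambda_{p,i}=0$ whenever $i\le d-1$ and $p>i-r$, with column $d$ left entirely unconstrained (it is governed by different phenomena, cf.\ Corollary \ref{cor36}). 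Your literal reading of the statement, with the band in row $i$ allowed to spill into column $d$, is an understandable parse of the theorem as printed, but it overclaims; had you truncated the band at column $d-1$, your first paragraph alone (together with the trivial vanishing $\lambda_{p,i}=0$ for $i>d$) would constitute a complete and correct proof.
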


\begin{proof}
If $A$ contains a field of positive characteristic, then  Huneke and Sharp \cite[Corollary 2.3]{HunekeSharp} proved  that ${\Ass} \,(H^{n-i}_I(R)) \subseteq {\Ass} \,(K^i_A)$, and thus ${\dim} \,(H^{n-i}_I(R)) \leq {\dim} \,(K^i_A)$. In the squarefree monomial ideal case, Yanagawa \cite[Theorem 2.11] {Yan01} proved that the straight module $H^{n-i}_I(R)$ is equivalent to the squarefree module $K^i_A$. In particular this gives the equality $\dim \,(H^{n-i}_I(R)) = {\dim} \,(K^i_A)$ \cite[Lemma 2.8]{Yan01}.

\vskip 2mm

Now assume in both cases that $A$ is $S_r$ and thus we have  $\dim \,(K^{i}_A) \leq i-r$ and consequently $\dim \,(H^{n-i}_I(R)) \leq i-r$ for all $1\leq i \leq d$. Then the result follows 
from the inequality $${\rm id}_R \,(H^{n-i}_I(R) ) \leq \dim \,(H^{n-i}_I(R)) $$ proved in   \cite[Corollary  3.9]{HunekeSharp} and \cite[Theorem 3.4]{Lyu93}.
Namely, the Lyubeznik numbers are the Bass numbers $\lambda_{p,i} (A)=\mu_p (\mathfrak{m},H_I^{n-i}(R))$ and thus the possible non-zero $\lambda_{p,i} (A)$ are in the range $0\leq p\leq i-r$. 
\end{proof}

\begin{theorem}  \label{thm2}
Assume either that:
\begin{itemize}
\item[$\bullet$] $A$ is $CM_r$ and contains a field of positive characteristic, or
\item[$\bullet$] $A$ is $CM_r$ and $I$ is a squarefree monomial ideal.
\end{itemize}
Then the Lyubeznik table of $A$  satisfies
$\lambda_{p,i}= 0 , \hskip 2mm \forall p\geq r $ and $i\in\{0,\dots, d-1\}. $
\end{theorem}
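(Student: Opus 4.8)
The plan is to run the same argument as in the proof of Theorem \ref{thm1}, but now feeding in the $CM_r$ criterion in place of the $S_r$ one. First I would recall the two structural inputs used there: in positive characteristic the inclusion $\Ass\,(\Hni)\subseteq\Ass\,(K^i_A)$ of Huneke--Sharp gives $\dim\,(\Hni)\le\dim\,(K^i_A)$, while in the squarefree case Yanagawa's equivalence between the straight module $\Hni$ and the squarefree module $K^i_A$ gives the equality $\dim\,(\Hni)=\dim\,(K^i_A)$. Combining either of these with the $CM_r$ criterion $\dim\,(K^i_A)\le r$ (part (ii) of the Proposition characterising $S_r$ and $CM_r$ rings), the hypothesis yields
\[
\dim\,(\Hni)\le\dim\,(K^i_A)\le r\qquad\text{for } 1\le i\le d-1.
\]

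Next I would pass from dimensions to Bass numbers exactly as before. Using $\id_R\,(\Hni)\le\dim\,(\Hni)\le r$ (from \cite[Corollary 3.9]{HunekeSharp} and \cite[Theorem 3.4]{Lyu93}), the Lyubeznik numbers $\la_{p,i}(A)=\mu_p(\fM,\Hni)$ vanish for every $p>r$. Together with the standard vanishing $\la_{p,i}(A)=0$ for $p>i$, this already settles all the claimed entries with $p\ge r+1$, leaving only the row $p=r$, that is the entries $\la_{r,i}(A)$ with $r\le i\le d-1$.

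The hard part is precisely this row $p=r$, which is \emph{not} covered by the crude injective-dimension bound: when $\dim\,(\Hni)=r$ one only gets $\id_R\,(\Hni)\le r$, so a priori $\mu_r(\fM,\Hni)$ could be nonzero. Recording that $\la_{r,i}(A)$ equals the number of copies of $\E{\fM}$ appearing in $H^r_{\fM}(\Hni)$, i.e.\ in the top local cohomology (with respect to $\fM$) of a module of dimension $r$, the task is to show this top local cohomology carries no $\fM$-torsion summand when $i\le d-1$. I would establish this top-degree vanishing separately in the two settings: in the squarefree case it should follow from the explicit $\ZZ^n$-graded description of straight modules, the point being that only the canonical index $i=d$ contributes a socle in top cohomological degree; in positive characteristic I would transfer the statement, either through the $\Ass$-containment together with the $F$-module structure or by reducing to the squarefree case via a squarefree Gr\"obner deformation in the spirit of Conca--Varbaro \cite{CV20}. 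Reconciling these two routes and pinning down the top-degree vanishing uniformly is the main obstacle; everything else is a direct transcription of the proof of Theorem \ref{thm1}.
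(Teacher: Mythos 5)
Your first half reproduces the paper's proof exactly: the paper's entire argument for Theorem \ref{thm2} is the single sentence that the proof is ``analogous to the proof of Theorem \ref{thm1}'' with the bound $\dim K^i_A\leq r$ in place of $\dim K^i_A\leq i-r$, i.e.\ precisely your chain (Huneke--Sharp $\Ass$-containment / Yanagawa equivalence) $\Rightarrow$ $\dim \Hni\leq \dim K^i_A$ $\Rightarrow$ ${\rm id}_R\,\Hni\leq \dim\Hni$ $\Rightarrow$ vanishing of the Bass numbers $\lambda_{p,i}=\mu_p(\fM,\Hni)$. You are also right that this chain, fed with the bound $\leq r$, only kills the entries with $p\geq r+1$; the paper passes over this off-by-one in silence, and for Theorem \ref{thm1} (where Schenzel's bound $i-r$ is used) no such discrepancy arises.

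Where your proposal fails is the plan for the remaining row $p=r$. The lemma you hope for --- that $H^r_{\fM}(\Hni)$ contains no copy of $E_R(R/\fM)$ whenever $\dim\Hni=r$ and $i\leq d-1$ --- is refuted by the paper's own Example \ref{ex1}: there $n=10$, $d=5$, the non-Cohen--Macaulay locus of $A=R/I$ has dimension $3$ (two pairs of components meet in dimensions $3$ and $2$), so $\dim K^i_A\leq 3$ for all $i\leq 4$ and hence $\dim H^{6}_I(R)=\dim K^4_A\leq 3$; yet $\lambda_{3,4}(A)=\mu_3(\fM,H^6_I(R))=1\neq 0$, which forces $\dim H^6_I(R)=3$ exactly. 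So a module $\Hni$ with $i\leq d-1$ of dimension exactly $r$ can have nonzero top Bass number, and, worse, a ring can satisfy ``$\dim K^i_A\leq r$ for all $i\leq d-1$'' (take $r=3$ in Example \ref{ex1}) while violating the conclusion of Theorem \ref{thm2} for that $r$. Hence no argument whatsoever can derive the theorem from that inequality: the inequality itself is the wrong translation of $CM_r$. The actual resolution is an off-by-one in the quoted Proposition, not a missing vanishing theorem. Under the paper's definition of $CM_r$ ($A_\fp$ Cohen--Macaulay whenever ${\rm ht}\,\fp\leq d-r$), the non-CM locus --- which for equidimensional $A$ equals $\bigcup_{i<d}\Supp K^i_A$ --- has dimension at most $r-1$, so the correct bound is $\dim K^i_A\leq r-1$ for $1\leq i\leq d-1$; this is what makes Example \ref{ex1} a sharp $CM_4$ (not $CM_3$) example. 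Feeding $\dim\Hni\leq r-1$ into your own first step gives ${\rm id}_R\,\Hni\leq r-1$, hence $\lambda_{p,i}=0$ for all $p\geq r$ and $i\leq d-1$, which is the theorem; the ``hard part'' you isolate then disappears entirely.
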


\begin{proof}
The proof is analogous to the proof of Theorem \ref{thm1} but in the present case we have
 $\dim \,(K^{i}_A) \leq r$  and thus $\dim \,(H^{n-i}_I(R)) \leq r$ for all $1\leq i \leq d$. 
\end{proof}

\begin{remark}
Under the hypothesis of Theorem \ref{thm2}, assume that $A$ is $CM_1$ and thus the only possible non-zero row of the Lyubeznik table is the $0$-th row. Then,  the Lyubeznik numbers of $A$  satisfy
$\lambda_{d,d}= \lambda_{0,1} +1$ and    $\lambda_{p,d}= \lambda_{0,d-p+1}$  for all $p\in\{2,\dots, d-1\} $ (see  \cite{GS98,BB05}). 
\end{remark}
\vskip 2mm

Using Grothendieck's spectral sequence $$E_2^{p,n-i}= H_{\fM}^p(H_I^{n-i}(R))\Longrightarrow H_{\fM}^{p+n-i}(R) $$ we can give a similar result for the $CM_2$ case.

\begin{corollary} \label{cor36}
Assume either that :
\begin{itemize}
\item[$\bullet$] $A$ is $CM_2$ and contains a field of positive characteristic, or
\item[$\bullet$] $A$ is $CM_2$ and $I$ is a squarefree monomial ideal.
\end{itemize}
Then the Lyubeznik numbers of $A$  satisfy
$\lambda_{d,d}= \lambda_{0,1} +  \lambda_{1,2} + 1$, $\lambda_{2,d}= \lambda_{0,d-1}$ and $\lambda_{p,d}= \lambda_{0,d-p+1}+\lambda_{1,d-p+2}$  for all $p\in\{3,\dots, d-1\}.$
\end{corollary}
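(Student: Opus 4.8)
The plan is to run the Grothendieck spectral sequence
$E_2^{p,n-i}=H^p_{\mathfrak m}(H^{n-i}_I(R))\Rightarrow H^{p+n-i}_{\mathfrak m}(R)$
and to read the asserted identities off its $E_\infty$ page. Since $R$ is regular of dimension $n$, the abutment vanishes except in total degree $n$, where $H^n_{\mathfrak m}(R)=E_R(R/\mathfrak m)$ is a single copy of the injective hull. It is convenient to index the page by $(p,i)$, so that the entry in position $(p,i)$ is the injective $\mathfrak m$-torsion module $H^p_{\mathfrak m}(H^{n-i}_I(R))$, which is a direct sum of $\lambda_{p,i}$ copies of $E_R(R/\mathfrak m)$; the functor $\mu_0(\mathfrak m,-)$ is then additive on short exact sequences of such modules, and the differential becomes $d_r\colon (p,i)\to(p+r,i+r-1)$.

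First I would pin down the shape of the page. By Theorem \ref{thm2} with $r=2$ the only entries that can be nonzero lie in the rows $p=0$ and $p=1$ and in the last column $i=d$, subject also to $\lambda_{p,i}=0$ for $p>i$; moreover $\lambda_{0,0}=0$ because $H^n_I(R)=0$ once $\dim A\ge 1$. Tracking $d_r$ against this shape, the only differentials that can survive are the two families entering the last column, namely $d_p\colon(0,d-p+1)\to(p,d)$ and $d_{p-1}\colon(1,d-p+2)\to(p,d)$. Each source $(0,i)$ with $1\le i\le d-1$, and each source $(1,i)$ with $2\le i\le d-1$, sits off total degree $n$, hence has $E_\infty=0$; as such a source carries a single outgoing differential and no incoming one, that differential must be injective.

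The second and third identities then fall out directly. For $2\le p\le d-1$ the position $(p,d)$ has total degree $p+n-d\neq n$, so $E_\infty^{p,d}=0$; its only differentials are the (one or two) injective incoming ones, so additivity of $\mu_0(\mathfrak m,-)$ gives $\lambda_{p,d}=\lambda_{0,d-p+1}+\lambda_{1,d-p+2}$, with the second summand absent when $p=2$ since no $d_1$ exists. This is exactly $\lambda_{2,d}=\lambda_{0,d-1}$ together with the stated formula for $3\le p\le d-1$.

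The first identity comes from the single anti-diagonal of total degree $n$. After passing to $E_\infty$ the only possibly nonzero contributions there are $E_\infty^{1,1}$ and $E_\infty^{d,d}$, because the entries $(p,p)$ with $2\le p\le d-1$ vanish by Theorem \ref{thm2} and $(0,0)$ vanishes by $\lambda_{0,0}=0$. Since their values of $\mu_0$ sum to $\mu_0(E_R(R/\mathfrak m))=1$ and $\mu_0(E_\infty^{d,d})=\lambda_{d,d}-\lambda_{0,1}-\lambda_{1,2}$, I obtain $\lambda_{1,1}+\lambda_{d,d}-\lambda_{0,1}-\lambda_{1,2}=1$. The remaining and genuinely delicate point is to show $\lambda_{1,1}=0$. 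I would argue this from $\lambda_{1,1}=\mu_0(\mathfrak m,H^1_{\mathfrak m}(H^{n-1}_I(R)))$ together with $\dim H^{n-1}_I(R)\le\dim K^1_A$: for an equidimensional ring $K^1_A$ has finite length, since a coheight-one associated prime, which is what would make $\dim K^1_A=1$, is excluded, so $H^{n-1}_I(R)$ has dimension $\le 0$ and its first local cohomology vanishes. This is where I expect the main difficulty to lie, as for a genuinely non-equidimensional $CM_2$ ring a coheight-one component forces $\lambda_{1,1}\neq 0$; thus the clean first identity is really the equidimensional statement, and controlling this corner term is the crux of the proof.
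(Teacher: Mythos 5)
Your spectral sequence argument is, in its main body, exactly the paper's proof: the same Grothendieck spectral sequence, the same restriction of the possibly nonzero entries to the rows $p=0,1$ and the column $i=d$ via Theorem \ref{thm2} with $r=2$, the same two families of injective differentials entering the last column, and the same bookkeeping of $\mu_0$ along the way (the paper leaves the splitting of the relevant extensions implicit; your remark that every entry is a finite direct sum of copies of $E_R(R/\mathfrak{m})$, so that $\mu_0$ is additive, makes this step honest). Your derivations of $\lambda_{2,d}=\lambda_{0,d-1}$ and $\lambda_{p,d}=\lambda_{0,d-p+1}+\lambda_{1,d-p+2}$ are correct. Moreover, you have put your finger on a genuine gap in the paper's own proof: the diagonal entry $E_2^{1,n-1}$ (which the paper's diagram marks as possibly nonzero) is touched by no differential, so the filtration of $H^n_{\mathfrak{m}}(R)$ only gives $\lambda_{1,1}+\lambda_{d,d}-\lambda_{0,1}-\lambda_{1,2}=1$, and the first identity of the corollary is \emph{equivalent} to $\lambda_{1,1}=0$ --- a point the paper never addresses. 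Your argument for $\lambda_{1,1}=0$ in the equidimensional case (via $\dim K^1_A\le 0$, hence $\dim H^{n-1}_I(R)\le 0$, hence $H^1_{\mathfrak{m}}(H^{n-1}_I(R))=0$) is sound, up to the minor point that one should first replace $I$ by its radical: equidimensionality excludes coheight-one \emph{minimal} primes, but only after reducing does it exclude coheight-one \emph{associated} primes, and $H^{n-1}_I(R)$ depends only on $\sqrt{I}$.

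Your final claim, however, is false: it is not true that for a non-equidimensional $CM_2$ ring a coheight-one component forces $\lambda_{1,1}\neq 0$. Take $I=(x_1,x_2,x_3,x_4)\cap(x_4,x_5)$ in $R=\KK[x_1,\dots,x_5]$, a line and a $3$-plane meeting at the origin. This is a squarefree monomial ideal, $R/I$ is non-equidimensional with a coheight-one component, and it is $CM_2$ (every localization at a prime of height $\le 1$ is regular), so it sits squarely inside the hypotheses of the corollary. Yet $R/I$ is sequentially Cohen--Macaulay (the dimension filtration $0\subset (x_4,x_5)/I\subset R/I$ has Cohen--Macaulay quotients of dimensions $1$ and $3$), so by \cite{AM15} its Lyubeznik table is trivial: $\lambda_{1,1}=0$ and all three identities of the corollary hold. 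The flaw in your reasoning is the passage from ``$\dim H^{n-1}_I(R)$ may equal $1$'' to ``$H^1_{\mathfrak{m}}(H^{n-1}_I(R))\neq 0$'': the module $H^{n-1}_I(R)$ is not finitely generated, so no non-vanishing theorem applies to it. Concretely, in the example one has the Mayer--Vietoris extension $0\to H^{4}_{(x_1,x_2,x_3,x_4)}(R)\to H^{4}_I(R)\to H^5_{\mathfrak{m}}(R)\to 0$, and $\lambda_{1,1}\neq 0$ would force this extension to split, which it does not (splitting would contradict the trivial table). So the corollary is not ``really the equidimensional statement''; what your analysis correctly exposes is that the paper owes the reader a proof that $\lambda_{1,1}=0$ under its hypotheses --- a gap in the paper's proof, not a counterexample to its statement --- and your own proof, which establishes it only under an added equidimensionality hypothesis and wrongly asserts failure otherwise, does not close that gap.
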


\begin{proof}
Under the $CM_2$ condition, the only possibly non-zero terms of Grothendieck spectral sequence are placed at the dot spots in the following diagram: 

\vskip 2mm

\begin{center}
\begin{tikzpicture}[scale=0.8]
\draw[step=.5cm,gray,very thin] (0,0) grid (6,7);
\draw[thick,->] (0,0) -- (6,0) node[anchor=north west] {{\small $p$}};
\draw[thick,->] (0,0) -- (0,7) node[anchor=south east] {{\small $n-i$}};
    \coordinate (1) at (1,1.5);
     \coordinate (11) at (0,1.5);
      \coordinate (12) at (0.5,1.5);
        \coordinate (2) at (1.5,1.5);
            \coordinate (3) at (2,1.5);
        \coordinate (4) at (4.5,1.5);
          \coordinate (5) at (0,2);
        \coordinate (6) at (0,2.5);
            \coordinate (7) at (0,3);
        \coordinate (8) at (0,5.5);
         \coordinate (9) at (0,6);
        \coordinate (10) at (0,5);
            \coordinate (15) at (0.5,2);
        \coordinate (16) at (0.5,2.5);
            \coordinate (17) at (0.5,3);
        \coordinate (18) at (0.5,5.5);
         \coordinate (19) at (0.5,6);
        \coordinate (20) at (0.5,5);
     \foreach \x in {(1),(2),(3),(4),(5),(6),(7),(8),(9),(10),(11),(12),(15),(16),(17),(18),(20)}{
        \fill \x circle[radius=2pt];
         \node (27) at ( -1,1.5)  {{\tiny $n-d$}};
           \node (27) at ( -1,2)  {{\tiny $n-d+1$}};
           \node (27) at ( -1,5.5)  {{\tiny $n-1$}};
              \node (27) at ( -1,6)  {{\tiny $n$}};
              
                \node (27) at ( 0,-0.5)  {{\tiny $0$}};
           \node (27) at ( 0.5,-0.5)  {{\tiny $1$}};
            \node (27) at ( 1,-0.5)  {{\tiny $2$}};
              \node (27) at ( 4.5,-0.5)  {{\tiny $d$}};
        }
\end{tikzpicture} 
\end{center}

\vskip 2mm

We have $\lambda_{0,0}=0$ by Grothendieck's vanishing theorem (see \cite[Theorem 6.1.2]{BS}). 
We also notice that $\lambda_{0,d}=\lambda_{1,d}=0$.

\vskip 2mm

The only possible non-zero differentials at each $E_j$-page, $j\geq 2$, of the spectral sequence are:
$$d_j: E_j^{0,n-j+1} \longrightarrow E_j^{j,n-d}  \;\; {\rm and} \; \; d_j: E_j^{1,n-j+1} \longrightarrow E_j^{j+1,n-d}.$$
By the general theory of spectral sequences, there exist filtrations $0\subseteq F^r_{n} \subseteq \cdots \subseteq F^r_{0} \subseteq H^r_{\fM}(R)$ for all $r$, such that the consecutive quotients are $F^r_{i}/F^r_{i+1}= E_\infty^{i,r-i}$.
Then, taking into account that $H^r_{\fM}(R)=0$ for all $r\neq n$, we have first:
\begin{itemize}
\item[$\bullet$] $0= E_\infty^{0,n-d+1}=E_{3}^{0,n-d+1}= \ker\left( d_2: E_2^{0,n-d+1} \longrightarrow E_2^{2,n-d}\right),$
\item[$\bullet$] $0= E_\infty^{2,n-d}=E_{3}^{2,n-d}= E_{2}^{2,n-d} / {\rm Im}\left( d_2: E_2^{0,n-d+1} \longrightarrow E_2^{2,n-d}\right),$
\end{itemize}
and thus $\lambda_{2,d}= \lambda_{0,d-1}$. 
For the next subdiagonal in the diagram we have, in the third page:
\begin{itemize}
\item[$\bullet$] $ E_3^{0,n-d+2}=E_{2}^{0,n-d+2},$
\item[$\bullet$] $0= E_\infty^{1,n-d+1}=E_{3}^{0,n-d+1}= \ker\left( d_2: E_2^{1,n-d+1} \longrightarrow E_2^{3,n-d}\right),$
\item[$\bullet$] $E_{3}^{3,n-d}= E_{2}^{3,n-d} / {\rm Im}\left( d_2: E_2^{1,n-d+1} \longrightarrow E_2^{3,n-d}\right),$
\end{itemize}
and in the fourth page:
\begin{itemize}
\item[$\bullet$] $0= E_\infty^{0,n-d+2}=E_{4}^{0,n-d+2}= \ker\left( d_3: E_3^{0,n-d+2} \longrightarrow E_3^{3,n-d}\right),$
\item[$\bullet$] $0= E_\infty^{2,n-d}=E_{4}^{2,n-d}= E_{3}^{2,n-d} / {\rm Im}\left( d_3: E_3^{0,n-d+2} \longrightarrow E_3^{3,n-d}\right).$
\end{itemize}
Therefore $\lambda_{3,d}= \lambda_{0,d-2}+\lambda_{1,d-1}$ and analogously we get $\lambda_{p,d}= \lambda_{0,d-p+1}+\lambda_{1,d-p+2}$  for all $p\in\{4,\dots, d-1\}.$ 
For the last case we only have to put into the picture the fact that $H^n_{\fM}(R)$ is isomorphic to the injective hull of the residue field which accounts for the $+1$ in the formula $\lambda_{d,d}= \lambda_{0,1} +  \lambda_{1,2} + 1$.

\end{proof}

\section{Squarefree monomial ideals}

A way to interpret Lyubeznik numbers for the case of squarefree monomial ideals  is in terms of the linear strands of the free resolution of the Alexander dual of the ideal.  This approach was given by  \`{A}lvarez Montaner and Vahidi \cite{AMV14} (see also \cite{AMY18}) and we will briefly recall it here. Throughout this section we  let $R=\KK[x_1,\dots, x_n]$ be a polynomial ring  with coefficients in a field $\KK$. Bass numbers behave well with respect to localization and completion so there is no inconvenience in working in this setting. The aim of this section is to use these methods to prove that the results given in Theorems \ref{thm1} and \ref{thm2} are sharp.

\vskip 2mm

Let $I^\vee$ be the Alexander dual of a squarefree monomial ideal $I\subseteq R$. Its 
 minimal  {\it $\mathbb{Z}$-graded free resolution} is an
exact sequence of free ${\mathbb{Z}}$-graded $R$-modules:
$$\mathbb{L}_{\bullet}(I^\vee): \hskip 3mm \xymatrix{ 0 \ar[r]& L_{m}
\ar[r]^{d_{m}}& \cdots \ar[r]& L_1 \ar[r]^{d_1}& L_{0} \ar[r]& I^\vee
\ar[r]& 0}$$  where the $j$-th term is of the form $$L_j =
\bigoplus_{\ell \in {\mathbb{Z}}}
R(-\ell)^{\beta_{j,\ell}(I^\vee)},$$ and the matrices of the
morphisms $d_j: L_j\longrightarrow L_{j-1}$ do not contain
invertible elements.  The $\mathbb{Z}$-graded {\it Betti numbers} of $I^\vee$
are the invariants $\beta_{j,\ell}(I^\vee)$. Given an integer $r$, the
{\it $r$-linear strand} of $\mathbb{L}_{\bullet}(I^\vee)$ is the complex:
$$\mathbb{L}_{\bullet}^{<r>}(I^\vee): \hskip 3mm \xymatrix{ 0 \ar[r]&
L_{n-r}^{<r>} \ar[r]^{d_{n-r}^{<r>}}& \cdots \ar[r]& L_1^{<r>}
\ar[r]^{d_1^{<r>}}& L_{0}^{<r>} \ar[r]& 0},$$ where $$L_j^{<r>} =  R(-j -r)^{\beta_{j,j+r}(I^\vee)} ,$$ and the
differentials $d_j^{<r>}: L_j^{<r>}\longrightarrow L_{j-1}^{<r>}$
are the corresponding components of $d_j$. 

\vskip 2mm

We point out that these differentials can be described using the so-called {\it monomial
matrices} introduced by Miller \cite{Mil00}.  These are matrices with scalar entries that keep track
of the degrees of the generators of the summands in the source and
the target.  Now  we construct a complex of $\KK$-vector spaces
$$\mathbb{F}_{\bullet}^{<r>}(I^{\vee})^{\ast}: \hskip 3mm \xymatrix{ 0 &
{\underbrace{\KK^{\beta_{n-r,n}(I^\vee) }}_{\deg 0}}\ar[l]& \cdots \ar[l]& {\underbrace{\KK^{\beta_{1,1+r}(I^\vee)}}_{\deg n-r-1} }\ar[l]&
{\underbrace{\KK^{\beta_{0,r}(I^\vee)} }_{\deg n-r}} \ar[l]& 0 \ar[l]},$$
where the morphisms are given by the transpose of the corresponding monomial matrices and thus we reverse the indices of the complex.
Then, the Lyubeznik numbers are described by means of the homology groups of these complexes (see \cite[Cor. 4.2]{AMV14}). 
 \begin{theorem} 
 Let $I^\vee$ be the Alexander dual of a squarefree monomial ideal $I\subseteq R$. Then
 $$\lambda_{p,n-r}(R/I)= {\rm
dim}_{\KK} H_{p}(\mathbb{F}_{\bullet}^{<r>}(I^{\vee})^{\ast}).$$
\end{theorem}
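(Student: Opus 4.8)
The plan is to read the Lyubeznik number as a Bass number of a local cohomology module, to compute that Bass number as Koszul cohomology, and then to recognize the resulting complex of $\KK$-vector spaces as $\mathbb{F}_{\bullet}^{<r>}(I^{\vee})^{\ast}$ by means of Alexander duality. By definition, with $i=n-r$ so that $n-i=r$, one has $\lambda_{p,n-r}(R/I)=\mu_p(\fM,\lc{r})$. Since $(R,\fM)$ is regular with $\fM=(x_1,\dots,x_n)$, the Koszul complex $K_\bullet$ on $x_1,\dots,x_n$ is a minimal free resolution of $\KK$, so for every $\ZZ^n$-graded module $M$ one has $\mu_p(\fM,M)=\dim_\KK \Ext^p_R(\KK,M)=\dim_\KK H^p(\Hom_R(K_\bullet,M))$. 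Here only the copies of the $\ZZ^n$-graded injective hull of $\KK$ in the minimal injective resolution contribute, because $\Hom_R(\KK,\,{}^{\ast}E(R/\fp))=0$ for every monomial prime $\fp\neq\fM$; this is what singles out the maximal ideal.

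Specializing to $M=\lc{r}$, the $p$-th term of $\Hom_R(K_\bullet,\lc{r})$ in the socle degree is $\bigoplus_{|\sigma|=p}[\lc{r}]_{\sigma}$, a sum over squarefree multidegrees $\sigma$ of cardinality $p$, with differentials induced by the multiplication maps $x_i\colon [\lc{r}]_{\sigma}\to[\lc{r}]_{\sigma+e_i}$. This is a finite complex of $\KK$-vector spaces whose cohomology computes the numbers $\lambda_{p,n-r}$. Its entries are controlled by Yanagawa's theory \cite{Yan01}: $\lc{r}$ is a straight module, equivalent to the squarefree module $\Ext^r_R(R/I,R)$, so its squarefree-degree pieces and the multiplication maps between them are combinatorial data attached to $I$.

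The decisive step is to match this combinatorial complex with the free resolution of the Alexander dual. Using Terai--Musta\c{t}\u{a} type duality together with Miller's monomial-matrix calculus \cite{Mil00}, the squarefree pieces $[\lc{r}]_{\sigma}$ are identified with the $\KK$-vector spaces $\KK^{\beta_{j,j+r}(I^{\vee})}$ carried by the $r$-linear strand $\mathbb{L}_{\bullet}^{<r>}(I^{\vee})$ --- the linear shift $r$ matching the cohomological degree $r$ of local cohomology --- while the multiplication maps correspond, after transposition of the monomial matrices, to the differentials $d_j^{<r>}$. Dualizing and reversing indices then produces exactly $\mathbb{F}_{\bullet}^{<r>}(I^{\vee})^{\ast}$, whose $p$-th term is $\KK^{\beta_{n-r-p,\,n-p}(I^{\vee})}$, and taking homology gives $\mu_p(\fM,\lc{r})=\dim_\KK H_p(\mathbb{F}_{\bullet}^{<r>}(I^{\vee})^{\ast})$. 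Since the linear strand is generally not exact, it is genuine homology, and not merely the ranks of the terms, that records the Lyubeznik numbers.

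I expect this last identification to be the main obstacle, and it breaks into two verifications. First, the $\ZZ^n$-degree bookkeeping through Alexander duality must be done so that the $r$-linear strand is paired with $\lc{r}$ rather than with a neighbouring local cohomology index, and so that the choice of socle degree is applied consistently throughout. Second, one must check that the transpose of each monomial matrix of $d_j^{<r>}$ agrees with the multiplication-by-$x_i$ maps on the squarefree pieces of $\lc{r}$; here minimality of $\mathbb{L}_{\bullet}(I^{\vee})$ is what guarantees that the identification is with the minimal complex, so that no spurious cancellation is introduced. Once the dictionary relating straight modules, squarefree modules, and Miller's monomial matrices is in place, the remaining work is routine degree bookkeeping.
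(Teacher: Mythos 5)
The paper offers no proof of this statement---it is quoted directly from \cite[Cor.~4.2]{AMV14}---so your attempt can only be compared with the argument of that reference, and your outline does follow essentially the same route: reading $\lambda_{p,n-r}(R/I)$ as the Bass number $\mu_p(\fM,H^{r}_I(R))$, computing it via the Koszul complex, using straightness (Yanagawa) to concentrate the computation in the degree $(-1,\dots,-1)$ piece whose terms are the squarefree components of $H^r_I(R)$, identifying those components with the Betti numbers of $I^\vee$ by Terai--Musta\c{t}\u{a} duality, and matching the multiplication maps with the transposed monomial matrices of the $r$-linear strand via the R\"omer--Yanagawa--Miller dictionary. Your two flagged verifications (the degree bookkeeping pairing $H^r_I(R)$ with the $r$-linear strand, and the identification of multiplication maps with transposed monomial matrices on the minimal resolution) are precisely the content of the results that \cite{AMV14} assembles, so nothing in your plan would fail.
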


\vskip 2mm

It has been shown in \cite{HSYZ18}, \cite{VZN19}, \cite{PPTY22} that the $S_r$ and $CM_r$ properties on the ring $R/I$ provide conditions on  the vanishing of Betti numbers of the Alexander dual ideals $I^{\vee}$ and consequently the shape of the corresponding {\it Betti table}.  In particular it describes the linear strands of the free resolution. To compute Lyubeznik numbers we have to take a step further and consider the homology of these linear strands so, a priori, it may seem that the results in Theorems \ref{thm1} and \ref{thm2} are not sharp. The following examples show that indeed the results are sharp.

\vskip 2mm

\begin{example}\label{ex4}

Let
$I=(x_1,x_2,x_3,x_4) \cap (x_1, x_2, x_4, x_6) \cap (x_1,x_2,x_5,x_6) \cap (x_1,x_2,x_5,x_7) \cap (x_1,x_2,x_7,x_8)  \cap
(x_1,x_3,x_4,x_6) \cap  (x_1,x_3,x_5,x_6) \cap (x_1,x_3,x_5,x_7) \cap  (x_1,x_3,x_6,x_8) \cap  (x_1,x_6,x_7,x_8) \cap (x_2,x_4,x_5,x_7) \cap
(x_2,x_4,x_6,x_8) \cap  (x_2,x_4,x_7,x_8) \cap (x_3,x_4,x_5,x_6) \cap  (x_3,x_4,x_6,x_8) \cap (x_3,x_4,x_7,x_8) \cap 
(x_4,x_5,x_6,x_7) \cap  (x_5,x_6,x_7,x_8) $ be an ideal in $R=\KK[x_1,\dots, x_8]$.
The minimal free resolution of its Alexander dual ideal is
$$\mathbb{L}_{\bullet}(I^\vee): \hskip 3mm \xymatrix{ 0 \ar[r] & R(-8)^{5} \ar[r] & R(-7)^{12}\oplus R(-6)^{4} \ar[r]& R(-5)^{28} \ar[r]& R(-4)^{18} \ar[r]& I^\vee
\ar[r]& 0}$$
and thus $I^\vee$ has two linear strands. The Lyubeznik table is:
$$\Lambda(R/I)=\begin{pmatrix}
 0 & 0 & 0 & 0& 0 \\
   & 0 & 0 & 7 & 0 \\
   &  & 0 & 0 & 0 \\
   &  &  & 0 & 7 \\
   &  &  &  & 1
\end{pmatrix}$$ 
The ring $R/I$ is $S_2$ (see \cite[Example 5.6]{Holmes}) but it is not Cohen-Macaulay because it has two local cohomology modules different from zero. It is not even generalized Cohen-Macaulay. This ring is also $CM_2$ so it satisfies the properties shown in Corollary \ref{cor36}.
\end{example}

\vskip 2mm

\begin{example}\label{ex1}
Let
$I=(x_1,x_2,x_3,x_4,x_5) \cap(x_1,x_2,x_3,y_4,y_5) \cap (y_1,y_2,y_3,y_4,y_5)$  be an ideal in $R=\KK[x_1,\dots, x_5, y_1,\dots,y_5]$.
The minimal free resolution of its Alexander dual ideal is
$$\mathbb{L}_{\bullet}(I^\vee): \hskip 3mm \xymatrix{ 0 \ar[r] & R(-7)\oplus R(-8) \ar[r]& R(-5)^3 \ar[r]& I^\vee
\ar[r]& 0}$$
and thus $I^\vee$ has three linear strands. The Lyubeznik table is:
$$\Lambda(R/I)=\begin{pmatrix}
0 &0 & 0 & 0 & 0& 0 \\
  &0 & 0 & 0 & 0& 0 \\
 &  & 0 & 1& 0 & 0 \\
 &  &  & 0 & 1 & 0 \\
 &  &  &  & 0 & 0 \\
 &  &  &  &  & 3
\end{pmatrix}$$ 
The ideal $I$ can be interpreted as the edge ideal of a graph $G(3,2)$ obtained from a Cohen-Macaulay bipartite graph $G$. Then, the ring $R/I$ is $CM_4$ by using \cite[Theorem 4.5]{HSYZ18}. The ring $R/J$ with $J=I \cap (x_1,x_2,x_3,x_4,y_1,y_2,y_3,y_4)$ is not equidimensional but it is still $CM_4$. Despite the fact that the minimal free resolution of its Alexander dual ideal is
$$\mathbb{L}_{\bullet}(J^\vee): \hskip 3mm \xymatrix{ 0 \ar[r] & R(-10) \ar[r] & R(-7)\oplus R(-8) \ar[r]\oplus R(-9)^2& R(-5)^3 \oplus R(-8) \ar[r]& J^\vee
\ar[r]& 0}$$ we have $\Lambda(R/I)=\Lambda(R/J)$.

\end{example}

\section{Squarefree initial ideals}

Let $R=\KK[x_1,\dots, x_n]$ be a polynomial ring with coefficients in a field $\KK$. Assume that $R$ is equipped with a $\ZZ^m$-graded structure such that $\deg(x_i)\in \ZZ^m_{\geq 0}$.  It has been know for a while that some homological invariants behave well with respect to Gr\"obner deformations. In \cite[Theorem 1.3]{CV20}, Conca and Varbaro  proved that  for a  $\ZZ^m$-graded ideal $I \subseteq R$ such that the initial ideal ${\rm in}(I)$ with respect to some term order  is squarefree, then $$\dim_{\KK} H^i_{\fM}(R/I)_\alpha = \dim_{\KK} H^i_{\fM}(R/{\rm in}(I))_\alpha$$ for all $i\in \ZZ_{\geq 0}$ and all $\alpha\in \ZZ^m$. Therefore, extremal Betti numbers, depth and Castelnuovo-Mumford regularity of $R/I$ and $R/{\rm in}(I)$ coincide.
Classes of ideals satisfying this condition are ASL ideals, Cartwright-Sturmfels ideals and Knutson ideals (see  \cite{CV20} for details).

\vskip 2mm
For our purposes we point out the following result:

\begin{proposition} \cite[Corollary 2.11]{CV20} 
Let $R=\KK[x_1,\dots, x_n]$ be a polynomial ring over a field.  Let $I \subseteq R$ be a pure homogeneous  ideal of codimension $\geq 2$ such that the initial ideal ${\rm in}(I)$ with respect to some term order  is squarefree. Then:

\begin{itemize}
\item[(i)] $R/I$ is $S_r$, $r\geq 2$, if and only if $R/ {\rm in}(I)$ is $S_r$.
\item[(ii)] $R/I$ is $CM_r$ if and only if $R/ {\rm in}(I)$ is $CM_r$.
\end{itemize}

\end{proposition}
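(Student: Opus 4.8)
The plan is to reduce both equivalences to a single invariant-theoretic statement: that the Krull dimensions of the deficiency modules $K^i_A=\Ext^{n-i}_R(A,R)$ are left unchanged when we pass from $A=R/I$ to $R/{\rm in}(I)$. By the Proposition characterizing the two conditions, $R/I$ is $S_r$ (with $r\ge 2$) if and only if $\dim K^i_{R/I}\le i-r$ for all $1\le i\le d$, and $R/I$ is $CM_r$ if and only if $\dim K^i_{R/I}\le r$ for all $1\le i\le d$; the identical criteria apply to $R/{\rm in}(I)$. Since the Hilbert functions of $R/I$ and $R/{\rm in}(I)$ coincide we already have $d=\dim R/I=\dim R/{\rm in}(I)$, so once we show
$$\dim K^i_{R/I}=\dim K^i_{R/{\rm in}(I)}\qquad\text{for all }i,$$
both biconditionals (i) and (ii) follow at once, by testing the two rings against the \emph{same} numerical thresholds.

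To establish this equality of dimensions I would combine graded local duality with the theorem of Conca and Varbaro quoted above. Graded local duality over the regular ring $R$ identifies $H^i_{\fM}(A)$ with the $\ZZ^m$-graded Matlis dual of $\Ext^{n-i}_R(A,\omega_R)=K^i_A(-\mathbf a)$, where $\mathbf a=\sum_j \deg(x_j)$; concretely, degree by degree one has $\dim_{\KK}(K^i_A)_\alpha=\dim_{\KK}H^i_{\fM}(A)_{-\alpha-\mathbf a}$, so that the graded Hilbert function of $K^i_A$ is just a reflection and shift of that of $H^i_{\fM}(A)$. By \cite[Theorem 1.3]{CV20} one has $\dim_{\KK}H^i_{\fM}(R/I)_\alpha=\dim_{\KK}H^i_{\fM}(R/{\rm in}(I))_\alpha$ for every $i$ and every $\alpha\in\ZZ^m$; transporting this equality through local duality shows that $K^i_{R/I}$ and $K^i_{R/{\rm in}(I)}$ have identical $\ZZ^m$-graded Hilbert functions.

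It then remains to recover the Krull dimension of a finitely generated graded module from its graded Hilbert function. Coarsening the $\ZZ^m$-grading along the functional $\alpha\mapsto\sum_j\alpha_j$, which is strictly positive on each $\deg(x_i)\ne 0$, turns $R$ into a positively $\ZZ$-graded ring in which every graded component of $K^i_A$ is still finite dimensional; the order of the pole at $t=1$ of the resulting one-variable Hilbert series then computes $\dim K^i_A$. As the multigraded Hilbert functions agree, so do the coarsened ones, and hence $\dim K^i_{R/I}=\dim K^i_{R/{\rm in}(I)}$ for all $i$, which is exactly what the reduction in the first paragraph needs.

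The main obstacle I anticipate is bookkeeping rather than conceptual: fixing the precise twist in graded local duality within the $\ZZ^m$-graded category, and verifying that the chosen coarsening is admissible (this is where the positivity $\deg(x_i)\in\ZZ^m_{\ge 0}$, together with $\deg(x_i)\ne 0$, is genuinely used). One must also check that the hypotheses that $I$ be pure of codimension $\ge 2$ are inherited by ${\rm in}(I)$: the codimension is read off from the preserved value of $d$, and unmixedness is detected through the dimensions of the $K^i_A$ via Schenzel's criterion, both of which we have just shown are preserved. With these points settled the whole argument is symmetric in $I$ and ${\rm in}(I)$, yielding the stated equivalences.
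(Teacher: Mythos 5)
Your proposal is correct and, in substance, reconstructs the original argument: the paper itself gives no proof of this statement (it is quoted from Conca--Varbaro \cite{CV20}), and the proof there rests on exactly the ingredients you assemble --- the preservation of the graded Hilbert functions of $H^i_{\fM}(R/I)$ under squarefree Gr\"obner degeneration, graded local duality to transfer this to the deficiency modules $K^i_A=\Ext^{n-i}_R(A,R)$, the fact that the Hilbert function of a finitely generated graded module determines its Krull dimension, and the Schenzel-type characterizations of $S_r$ and $CM_r$ in terms of $\dim K^i_A$. The only step worth making explicit is the routine transfer between the graded setting (where duality and Hilbert functions live) and the complete local setting in which the paper states Schenzel's criteria; this is harmless because $S_r$, $CM_r$ and the numbers $\dim K^i_A$ are unchanged under localization at the irrelevant maximal ideal and completion.
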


\vskip 2mm

It has been proved by Alan\'is-L\'opez, N\'u\~nez-Betancourt and Ram\'irez-Moreno \cite{ANR22} that the graphs $\Gamma_t(R/I)$, and consequently some Lyubeznik numbers, also behave well with respect to Gr\"obner deformations.

\begin{theorem} \cite[Theorem 3.4]{ANR22}
Let $R=\KK[x_1,\dots, x_n]$ be a polynomial ring over a field.  Let $I \subseteq R$ be a pure homogeneous  ideal of codimension $\geq 2$ such that the initial ideal ${\rm in}(I)$ with respect to some term order  is squarefree. Then,
$$\# \Gamma_t(R/I) = \# \Gamma_t(R/ {\rm in}(I)).$$
\end{theorem}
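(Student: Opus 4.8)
The plan is to show that the purely combinatorial quantity $\#\Gamma_t(R/I)$, which measures the connectedness in codimension $t$ of $\spec(R/I)$, is in fact governed by homological data of $R/I$ that the Conca--Varbaro theorem quoted above keeps constant along the degeneration. The essential observation is that one should \emph{not} attempt to match the Lyubeznik numbers $\lambda_{p,i}$ directly: these are Bass numbers of the modules $H^{n-i}_I(R)$, and the behaviour of $H^{\bullet}_I(R)$ under a Gr\"obner deformation is not controlled. Instead one routes everything through the local cohomology modules $H^i_{\fM}(R/I)$, equivalently through the deficiency modules $K^i_{R/I}=\Ext^{n-i}_R(R/I,R)$ which are their Matlis duals, since it is precisely the graded dimensions $\dim_{\KK} H^i_{\fM}(R/I)_\alpha$ that are preserved when $\mathrm{in}(I)$ is squarefree.

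First I would fix the reductions. Because $I$ is pure and a Gr\"obner degeneration preserves both dimension and codimension, $R/I$ and $R/\mathrm{in}(I)$ are equidimensional of the same dimension $d$, and $R/\mathrm{in}(I)$ is a reduced Stanley--Reisner ring whose graph $\Gamma_t$ is read off from the intersection pattern of its monomial minimal primes. Passing to the completion and the strict henselization changes neither side of the claimed equality, by the Remark following Zhang's theorem, so I may assume the ring is complete with separably closed residue field; this is the setting in which the formulas recalled above are valid.

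The crux is a cohomological description of $\#\Gamma_t$ valid in every codimension. For the two extreme cases this is already on the table: Walther gives $\#\Gamma_{d-1}(A)=\lambda_{0,1}(A)+1$ and N\'u\~nez-Betancourt--Spiroff--Witt give $\#\Gamma_{d-2}(A)=\#\Gamma_{d-1}(A)+\lambda_{1,2}(A)$. The goal is to promote this to a statement that connectedness in codimension $t$ is controlled by the top deficiency modules $K^i_{R/I}$ in the range $d-t+1\leq i\leq d$, the case $t=1$ recovering the classical link between connectedness in codimension one and the indecomposability of the canonical module $K^d_{R/I}=\omega_{R/I}$ behind Zhang's theorem. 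Concretely I would run a Mayer--Vietoris spectral sequence for $H^{\bullet}_{\fM}$ along the irreducible components of $\spec(R/I)$ and identify $\#\Gamma_t$ with the rank of the zeroth homology of the nerve appearing on its $E_1$-page in the relevant degrees. Granting such a description, the theorem follows at once: Conca--Varbaro gives $\dim_{\KK} H^i_{\fM}(R/I)_\alpha=\dim_{\KK} H^i_{\fM}(R/\mathrm{in}(I))_\alpha$ for all $i$ and $\alpha$, hence the controlling modules agree and $\#\Gamma_t(R/I)=\#\Gamma_t(R/\mathrm{in}(I))$.

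The hard part is exactly this characterization, for two reasons. The listed Walther and NBSW identities only pin the counts down for $t=d-1,d-2$ (and degenerate to mere inequalities in the remaining NBSW range), so a genuine connectedness-versus-local-cohomology dictionary in \emph{all} codimensions is needed; and even granting such a dictionary one must check that $\#\Gamma_t$ depends only on the graded Hilbert functions of the $H^i_{\fM}$ and not on finer module structure, since it is only these Hilbert functions that Conca--Varbaro preserves. A robust alternative that avoids pinning down the exact formula is to work in the flat one-parameter family interpolating $R/I$ and $R/\mathrm{in}(I)$: connectedness in codimension $t$ behaves semicontinuously in such a family, yielding one inequality between the two counts, while the constancy of $H^i_{\fM}$ across the family forces the reverse. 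Making either the spectral-sequence bookkeeping or this semicontinuity-plus-rigidity argument precise is where the real work lies; the remaining steps are formal.
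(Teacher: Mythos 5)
The paper does not actually prove this statement: it is quoted verbatim from \cite[Theorem 3.4]{ANR22}, so there is no internal argument to compare yours against, and your proposal must stand or fall on its own. It falls, because the step you yourself flag as ``the hard part'' is the entire content of the theorem, and the bridge you propose cannot carry it. What Conca--Varbaro \cite{CV20} preserves is the collection of graded Hilbert functions of $H^i_{\fM}(R/I)$, equivalently of the deficiency modules $K^i_{R/I}$. But every component-counting formula you invoke --- Zhang's $\lambda_{d,d}=\#\Gamma_1$, Walther's $\lambda_{0,1}=\#\Gamma_{d-1}-1$, N\'u\~nez-Betancourt--Spiroff--Witt's $\lambda_{1,2}=\#\Gamma_{d-2}-\#\Gamma_{d-1}$ --- is phrased in Lyubeznik numbers, i.e.\ Bass numbers of the modules $H^{n-i}_I(R)$, and these are \emph{not} functions of the Hilbert functions of the $H^i_{\fM}(R/I)$: in characteristic zero no transfer statement is known at all, and in positive characteristic the best available result is the one-sided inequality $\lambda_{p,i}(R/I)\leq\lambda_{p,i}(R/{\rm in}(I))$ of Proposition \ref{NV}. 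So even the two cases $t=d-1,d-2$ where a dictionary exists do not follow from your route, and for the remaining $t$ only the inequality in part (iii) of the proposition of \cite{NBSW19} is known. Note also that the paper's own logic runs in the opposite direction: the corollary following this theorem \emph{deduces} the invariance of $\lambda_{d,d},\lambda_{0,1},\lambda_{1,2}$ from the invariance of the $\#\Gamma_t$, precisely because no independent control of Lyubeznik numbers under deformation is available.

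Two further steps would fail as written. First, the reduction ``pass to the completion and the strict henselization, this changes neither side'' is incorrect: $\#\Gamma_t$ is not invariant under these operations, since minimal primes can split (over a non-separably-closed $\KK$ an irreducible component need not stay irreducible); this is exactly why Zhang's theorem requires a separably closed residue field and why the paper's Remark after it must replace $A$ by $B=\widehat{\widehat{A}^{\rm sh}}$. The theorem at hand concerns the actual minimal primes of $R/I$ over the given, arbitrary field $\KK$, so this reduction discards part of the statement. Second, both of your closing mechanisms are assertions rather than arguments: the Mayer--Vietoris/nerve spectral sequence is indexed by the irreducible components of ${\rm Spec}(R/I)$, which bear no relation (not even in number) to those of ${\rm Spec}(R/{\rm in}(I))$, so there is nothing to compare page by page across the degeneration; and the semicontinuity-plus-rigidity alternative never identifies where squarefreeness of ${\rm in}(I)$ enters, yet it must enter essentially, since for general initial ideals the analogous transfer statements (Cohen--Macaulayness, $S_r$, $CM_r$) are all false. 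A correct proof has to use the squarefree hypothesis at a concrete point --- in \cite{ANR22} and \cite{CV20} this is where the real work happens --- and your sketch has no such point.
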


\vskip 2mm

\begin{corollary}
Let $R=\KK[x_1,\dots, x_n]$ be a polynomial ring over a field.  Let $I \subseteq R$ be a pure homogeneous  ideal of codimension $\geq 2$ such that the initial ideal ${\rm in}(I)$ with respect to some term order  is squarefree. Then,
$$\lambda_{d,d}(R/I) = \lambda_{d,d}(R/ {\rm in}(I)) , \hskip 2mm  \lambda_{0,1}(R/I) = \lambda_{0,1}(R/ {\rm in}(I))  \hskip 2mm {\rm and} \hskip 2mm  \lambda_{1,2}(R/I) = \lambda_{1,2}(R/ {\rm in}(I)).$$
\end{corollary}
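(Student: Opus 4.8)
The plan is to reduce each of the three Lyubeznik numbers in the statement to the purely combinatorial data carried by the graphs $\Gamma_t$, and then to transport equality across the Gr\"obner degeneration by means of the theorem of Alan\'is-L\'opez, N\'u\~nez-Betancourt and Ram\'irez-Moreno. Concretely, these three numbers are precisely the ones for which a closed formula in terms of the $\Gamma_t$ is available: by Zhang's theorem one has $\lambda_{d,d} = \#\Gamma_1$, while the formulas of Walther and of N\'u\~nez-Betancourt, Spiroff and Witt recalled above give $\lambda_{0,1} = \#\Gamma_{d-1} - 1$ and $\lambda_{1,2} = \#\Gamma_{d-2} - \#\Gamma_{d-1}$. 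Since the Gr\"obner degeneration preserves dimension, $\dim R/I = \dim R/{\rm in}(I) =: d$, it suffices to establish these three formulas for both $R/I$ and $R/{\rm in}(I)$ and then match the right-hand sides term by term.

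First I would arrange the hypotheses under which those formulas hold. Lyubeznik numbers are unchanged by completion, and, via the Remark following Zhang's theorem, the computation of $\lambda_{d,d}$ through $\#\Gamma_1$ survives the passage to $B=\widehat{\widehat{A}^{\rm sh}}$; thus I may work over a complete local ring with separably closed residue field, where Zhang's theorem applies verbatim. For $\lambda_{0,1}$ and $\lambda_{1,2}$ I additionally need the ring to be equidimensional of dimension $\geq 3$. Equidimensionality of $R/I$ is exactly the purity hypothesis; that the same holds for $R/{\rm in}(I)$ I would deduce from the Conca--Varbaro framework, since the equality $\dim_{\KK} H^i_{\mathfrak m}(R/I)_\alpha = \dim_{\KK} H^i_{\mathfrak m}(R/{\rm in}(I))_\alpha$ forces $\dim K^i_{R/I} = \dim K^i_{R/{\rm in}(I)}$ for all $i$, and equidimensionality is governed by the dimensions of these deficiency modules. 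The low-dimensional cases $d\leq 2$ I would treat separately: there $\lambda_{0,1}$ and $\lambda_{1,2}$ are forced to vanish, so the claimed equalities hold trivially and only Zhang's formula for $\lambda_{d,d}$ remains.

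With the formulas available on both sides, the final step is immediate. The theorem of Alan\'is-L\'opez, N\'u\~nez-Betancourt and Ram\'irez-Moreno gives $\#\Gamma_t(R/I) = \#\Gamma_t(R/{\rm in}(I))$ for every $t$, in particular for $t\in\{1,d-1,d-2\}$. Substituting these equalities into the three formulas yields $\lambda_{d,d}(R/I)=\lambda_{d,d}(R/{\rm in}(I))$, $\lambda_{0,1}(R/I)=\lambda_{0,1}(R/{\rm in}(I))$ and $\lambda_{1,2}(R/I)=\lambda_{1,2}(R/{\rm in}(I))$. The codimension $\geq 2$ hypothesis enters precisely to license the application of that theorem and of the Conca--Varbaro results.

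I expect the main obstacle to lie in the bookkeeping of base changes. The transported graph identity is stated for the rings $R/I$ and $R/{\rm in}(I)$ themselves, whereas Zhang's formula for $\lambda_{d,d}$ refers to the graph of the completed strict henselization $B$; reconciling these, i.e. checking that passing from $R/I$ to $B$ leaves the relevant counts $\#\Gamma_t$ unchanged (equivalently, that the minimal primes do not split in a way that alters the edge structure for the values of $t$ in play), is the delicate point. Running alongside it is the verification that $R/{\rm in}(I)$ genuinely inherits both equidimensionality and dimension $\geq 3$, so that the Walther and N\'u\~nez-Betancourt--Spiroff--Witt formulas may legitimately be invoked on it and not merely on $R/I$.
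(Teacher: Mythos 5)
Your proposal is correct and follows exactly the route the paper intends: the corollary is stated immediately after the theorem of Alan\'is-L\'opez, N\'u\~nez-Betancourt and Ram\'irez-Moreno precisely because it follows by combining that graph identity $\#\Gamma_t(R/I)=\#\Gamma_t(R/{\rm in}(I))$ with Zhang's formula $\lambda_{d,d}=\#\Gamma_1$ and the Walther and N\'u\~nez-Betancourt--Spiroff--Witt formulas for $\lambda_{0,1}$ and $\lambda_{1,2}$. Your extra care about equidimensionality of $R/{\rm in}(I)$, the passage to the complete strictly henselian setting, and the low-dimensional cases addresses genuine bookkeeping points that the paper leaves implicit, but does not change the argument.
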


\vskip 2mm
In positive characteristic, Nadi and Varbaro \cite{NV19} proved the following inequality between the Lyubeznik numbers of $R/I$ and those of $R/{\rm in}(I)$.

\begin{proposition} \cite[Corollary 2.5]{NV19}  \label{NV}
Let $R=\KK[x_1,\dots, x_n]$ be a polynomial ring over a field of positive characteristic.  Let $I \subseteq R$ be an homogeneous  ideal such that the initial ideal ${\rm in}(I)$ with respect to some term order  is a squarefree monomial ideal. Then $\lambda_{p,i}(R/I) \leq \lambda_{p,i}(R/{\rm in}(I))$.
%
%
%

\end{proposition}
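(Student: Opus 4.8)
The plan is to realize $R/\mathrm{in}(I)$ as a flat Gr\"obner degeneration of $R/I$ and to show that the Lyubeznik numbers, viewed as Bass numbers of local cohomology modules, can only increase under such a specialization. First I would fix a weight vector $w\in\NN^n$ that selects the given term order on $I$, so that $\mathrm{in}_w(I)=\mathrm{in}(I)$, and form the homogenization $\tilde{I}\subseteq R[t]$ with respect to $w$. This yields a $\KK[t]$-algebra $R[t]/\tilde{I}$ that is flat over $\KK[t]$, with special fiber $R/\mathrm{in}(I)$ at $t=0$ and fiber isomorphic to $R/I$ at every $t\neq 0$. The whole problem is thereby converted into a statement about how the invariants $\lambda_{p,i}$ behave along the fibers of this one-parameter flat family.

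Next I would rewrite the quantities to be compared. By definition $\lambda_{p,i}(R/I)=\mu_p(\fM,H^{n-i}_I(R))$, and by Huneke and Sharp these Bass numbers are finite in positive characteristic. The essential feature of positive characteristic that I would exploit is that each $H^{n-i}_I(R)$ carries the structure of an $F$-finite $F$-module: it is presented by a \emph{generating morphism} $\beta\colon M\to F^{*}(M)$ with $M$ a finitely generated $R$-module. This is what replaces the (generally false) assertion that local cohomology is finitely generated, and it is precisely the reduction that makes semicontinuity arguments available, since the socle dimensions $\mu_p(\fM,H^{n-i}_I(R))=\dim_\KK\mathrm{Soc}\,H^p_\fM(H^{n-i}_I(R))$ are computable from the finitely generated datum $(M,\beta)$.

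I would then run the same construction in the family. The support $V(\tilde{I})\subseteq\spec R[t]$ produces local cohomology modules $H^{n-i}_{\tilde{I}}(R[t])$, again $F$-finite $F$-modules, whose restriction to $t=1$ recovers $H^{n-i}_I(R)$ and whose restriction to $t=0$ recovers $H^{n-i}_{\mathrm{in}(I)}(R)$. Because local cohomology commutes with the flat base change $\KK[t]\to\KK[t]/(t-s)$, the finitely generated generating data specialize compatibly, and the standard upper semicontinuity of fiber dimensions for coherent sheaves over $\mathbb{A}^1$ applies to these generators. Passing to socles then shows that the Bass numbers $\mu_p(\fM,\,\cdot\,)$ of the fibers are upper semicontinuous in $s$; since $s=0$ is the special point, this yields $\lambda_{p,i}(R/I)\leq\lambda_{p,i}(R/\mathrm{in}(I))$.

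The main obstacle is exactly this last passage: local cohomology modules are not finitely generated, so classical semicontinuity does not apply directly, and one must verify that forming the iterated local cohomology $H^p_\fM(-)$ and then the socle is compatible with the specialization of the $F$-module generators, that is, that the Frobenius action is respected by the degeneration and that no socle class is lost in moving from the special fiber to the general one. Controlling this requires the full $F$-module formalism, namely the finiteness of Bass numbers together with the bound $\mathrm{id}_R H^{n-i}_I(R)\leq\dim H^{n-i}_I(R)$ recalled earlier; the Conca--Varbaro equality of the Hilbert functions of $H^\bullet_\fM(R/I)$ and $H^\bullet_\fM(R/\mathrm{in}(I))$ then serves as a consistency check that the degeneration does not disturb the coarse cohomological invariants.
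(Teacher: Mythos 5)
The paper does not prove this proposition; it quotes it from Nadi--Varbaro [NV19, Cor.~2.5], so your sketch must be measured against that argument. As written, your proof has two concrete failures. First, the assertion that $H^{n-i}_{\widetilde{I}}(R[t])$ ``restricted to $t=s$'' recovers $H^{n-i}_I(R)$ for $s\neq 0$ and $H^{n-i}_{\mathrm{in}(I)}(R)$ for $s=0$ is not a formal fact: the base change $\KK[t]\to\KK[t]/(t-s)$ is a quotient map, not a flat one (despite your calling it flat), so local cohomology does not commute with it. From $0\to R[t]\xrightarrow{\,t-s\,} R[t]\to R\to 0$ one only obtains exact sequences $0\to H^{j}_{\widetilde{I}}(R[t])/(t-s)\to H^{j}_{\mathrm{in}(I)}(R)\to \mathrm{Ann}_{t-s}\,H^{j+1}_{\widetilde{I}}(R[t])\to 0$ at $s=0$, and controlling those torsion terms is precisely the hard content of Conca--Varbaro's theorem, not an input you may assume. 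Second, the step you yourself flag as ``the main obstacle''---upper semicontinuity of the Bass numbers $\mu_p(\fM,-)$ of the fibers---is the entire theorem; deferring it to ``the full $F$-module formalism'' is not an argument, and neither the finiteness of Bass numbers nor the bound $\mathrm{id}_R\, H^{n-i}_I(R)\le\dim H^{n-i}_I(R)$ produces any semicontinuity statement for these non-coherent modules.

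There is also a structural reason the sketch cannot be completed along these lines: the inequality is false in characteristic zero. For the $2\times 2$ minors of a generic $2\times 3$ matrix (the example cited in the introduction of this paper), $R/I$ is Cohen--Macaulay with nontrivial Lyubeznik table in characteristic $0$, while $\mathrm{in}(I)$ under a diagonal term order is squarefree and $R/\mathrm{in}(I)$ is Cohen--Macaulay, hence has trivial table. Your mechanism---flat degeneration plus semicontinuity of finitely generated ``generating data''---is characteristic-blind: holonomic $D$-modules supply exactly the same finiteness in characteristic $0$, so the argument, if it worked, would prove the false statement as well. Frobenius must enter essentially, and in [NV19] it does: since Frobenius multiplies graded degrees by $p$, graded $F$-module theory identifies $\lambda_{p,i}(R/I)$ with the stable rank of the $p$-linear Frobenius iteration on the finite-dimensional space $\left(\mathrm{Ext}^{n-p}_R(\mathrm{Ext}^{n-i}_R(R/I,\omega_R),\omega_R)\right)_0$, whence $\lambda_{p,i}(R/I)\le \dim_\KK \left(\mathrm{Ext}^{n-p}_R(\mathrm{Ext}^{n-i}_R(R/I,\omega_R),\omega_R)\right)_0$; the Conca--Varbaro equality of Hilbert functions makes the relative Ext module of the family $\KK[t]$-flat, so ordinary coherent semicontinuity bounds this dimension by the corresponding one for $\mathrm{in}(I)$; and for a squarefree monomial ideal that dimension \emph{equals} $\lambda_{p,i}(R/\mathrm{in}(I))$ by Yanagawa's formula. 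Your sketch is missing the first and last of these steps, which is where all the characteristic-$p$ content lies, and it misuses Conca--Varbaro as a ``consistency check'' when it is in fact the ingredient that legitimizes the only semicontinuity actually available (that of finitely generated Ext modules, not of local cohomology).
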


%

%
%

Combining this result with Theorems \ref{thm1} and \ref{thm2} we obtain the following:

\vskip 2mm

\begin{corollary}
Let $R=\KK[x_1,\dots, x_n]$ be a polynomial ring over a field of positive characteristic.  Let $I \subseteq R$ be an homogeneous  ideal such that the initial ideal ${\rm in}(I)$ with respect to some term order  is a squarefree monomial ideal. Then:

\begin{itemize}

\item If $R/{\rm in}(I)$ is $S_r$ with $r\geq 2$ then the Lyubeznik table of $R/I$  satisfies

\vskip 2mm

$\lambda_{i,i}(R/I)= \lambda_{i,i+1}(R/I)= \cdots =\lambda_{i,i+(r-1)}(R/I)=0,$ for $ i\in\{0,\dots, d-1\}.$

\vskip 2mm

\item If  $R/{\rm in}(I)$ is $CM_r$ then the Lyubeznik table of $R/I$  satisfies

\vskip 2mm

$\lambda_{p,i}(R/I)= 0 , \hskip 2mm \forall p\geq r $ and $i\in\{0,\dots, d-1\}. $

\end{itemize}
\end{corollary}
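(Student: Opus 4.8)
The plan is to obtain the statement as a formal consequence of two ingredients already in hand: the degeneration inequality of Proposition \ref{NV}, which in positive characteristic gives $\lambda_{p,i}(R/I) \leq \lambda_{p,i}(R/{\rm in}(I))$ for every pair $(p,i)$, and the vanishing results of Theorems \ref{thm1} and \ref{thm2} applied not to $R/I$ but to $R/{\rm in}(I)$. The point is that ${\rm in}(I)$ is a squarefree monomial ideal by hypothesis, so the squarefree-monomial branch of those theorems is available for $R/{\rm in}(I)$ regardless of any properties of $R/I$ itself. A preliminary observation I would record is that passing to an initial ideal preserves the Hilbert function and hence the Krull dimension, so that $d := \dim R/I = \dim R/{\rm in}(I)$; this guarantees that the two Lyubeznik tables are indexed over the same range $0 \leq p \leq i \leq d$ and that the inequality of Proposition \ref{NV} compares matching entries.

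For the first bullet, I would argue as follows. Since ${\rm in}(I)$ is a squarefree monomial ideal and $R/{\rm in}(I)$ is $S_r$ with $r \geq 2$, the squarefree-monomial case of Theorem \ref{thm1} yields $\lambda_{i,i+j}(R/{\rm in}(I)) = 0$ for all $i \in \{0,\dots,d-1\}$ and all $j \in \{0,\dots,r-1\}$. Feeding this into Proposition \ref{NV} gives $\lambda_{i,i+j}(R/I) \leq \lambda_{i,i+j}(R/{\rm in}(I)) = 0$ in the same range, and because Lyubeznik numbers are non-negative integers the left-hand side must itself be zero. This is exactly the asserted vanishing $\lambda_{i,i}(R/I) = \lambda_{i,i+1}(R/I) = \cdots = \lambda_{i,i+(r-1)}(R/I) = 0$ for $i \in \{0,\dots,d-1\}$. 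For the second bullet the same chaining applies verbatim with Theorem \ref{thm2} in place of Theorem \ref{thm1}: the $CM_r$ hypothesis on $R/{\rm in}(I)$ forces $\lambda_{p,i}(R/{\rm in}(I)) = 0$ for all $p \geq r$ and $i \in \{0,\dots,d-1\}$, and the inequality together with non-negativity then propagates this vanishing to $R/I$.

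I do not expect a substantive obstacle here, since the proof is essentially a chaining of a comparison inequality with a vanishing theorem and the heavy lifting has been done in Proposition \ref{NV} and Theorems \ref{thm1}--\ref{thm2}. The only points genuinely requiring care are bookkeeping ones: confirming that $\dim R/I = \dim R/{\rm in}(I)$ so the two index ranges agree, and using non-negativity of the Lyubeznik numbers to upgrade the inequality $\lambda_{p,i}(R/I) \leq 0$ to an equality. It is worth emphasizing in the write-up that no hypothesis is placed on $R/I$ beyond having a squarefree initial ideal; the $S_r$ or $CM_r$ assumption lives entirely on the combinatorial degeneration $R/{\rm in}(I)$, which is precisely what makes the statement useful.
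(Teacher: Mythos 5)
Your proposal is correct and matches the paper's argument: the paper derives this corollary precisely by applying the squarefree-monomial case of Theorems \ref{thm1} and \ref{thm2} to $R/{\rm in}(I)$ and then transferring the vanishing to $R/I$ via the inequality of Proposition \ref{NV} together with non-negativity of Lyubeznik numbers. Your additional remark that $\dim R/I = \dim R/{\rm in}(I)$ (via preservation of the Hilbert function) is sound bookkeeping that the paper leaves implicit.
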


It is quite common that the Lyubeznik table of a monomial ideal is trivial and thus the following easy consequence becomes relevant.

\begin{corollary}
Let $R=\KK[x_1,\dots, x_n]$ be a polynomial ring over a field of positive characteristic.  Let $I \subseteq R$ be an homogeneous  ideal such that the initial ideal ${\rm in}(I)$ with respect to some term order  is a squarefree monomial ideal. If the Lyubeznik table of $R/{\rm in}(I)$ is trivial then the Lyubeznik table of $R/I$ is trivial as well.
\end{corollary}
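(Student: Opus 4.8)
The plan is to combine the Nadi--Varbaro inequality of Proposition \ref{NV} with the positivity and non-vanishing properties of Lyubeznik numbers recorded after the Theorem/Definition. Write $A=R/I$ and $B=R/{\rm in}(I)$ and let $d=\dim A$. The first thing I would record is that $\dim B=d$ as well: passing to an initial ideal preserves the Hilbert function, so $A$ and $B$ have the same dimension, and therefore the distinguished corner position $(d,d)$ is the same for both Lyubeznik tables. This alignment is what makes the entrywise comparison meaningful.

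With $d$ fixed, the argument is a term-by-term comparison. For every pair $(p,i)\neq(d,d)$, triviality of the Lyubeznik table of $B$ gives $\lambda_{p,i}(B)=0$, and Proposition \ref{NV} then yields $0\le\lambda_{p,i}(A)\le\lambda_{p,i}(B)=0$, the left inequality being simply the non-negativity of Bass numbers. Hence $\lambda_{p,i}(A)=0$ for all such $(p,i)$.

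It remains to pin down the corner entry. Here I would invoke the general property $\lambda_{d,d}(A)\neq 0$, valid for any $d$-dimensional local ring containing a field, together with the inequality from the other direction: $1\le\lambda_{d,d}(A)\le\lambda_{d,d}(B)=1$, the last equality because the table of $B$ is trivial. Since Lyubeznik numbers are non-negative integers, this squeezes $\lambda_{d,d}(A)=1$. Combining the two steps gives $\lambda_{d,d}(A)=1$ and $\lambda_{p,i}(A)=0$ for all $(p,i)\neq(d,d)$, which is exactly the assertion that the Lyubeznik table of $R/I$ is trivial.

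I do not expect a genuine obstacle once Proposition \ref{NV} is available: the statement is essentially a formal consequence of the inequality $\lambda_{p,i}(R/I)\le\lambda_{p,i}(R/{\rm in}(I))$ squeezed against non-negativity on one side and the universal non-vanishing of $\lambda_{d,d}$ on the other. The only point deserving care is the dimension bookkeeping, namely checking $\dim(R/I)=\dim(R/{\rm in}(I))$ so that the unique nonzero entry sits in the same slot for both rings; this is immediate from the invariance of the Hilbert function under Gr\"obner degeneration.
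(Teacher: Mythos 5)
Your proposal is correct and is exactly the argument the paper intends: the paper states this as an ``easy consequence'' of Proposition \ref{NV}, leaving implicit precisely the squeeze you carry out (non-negativity plus $\lambda_{p,i}(R/I)\le\lambda_{p,i}(R/\mathrm{in}(I))$ kills all off-corner entries, and $\lambda_{d,d}\neq 0$ together with the same inequality pins the corner at $1$). Your extra remark that $\dim(R/I)=\dim(R/\mathrm{in}(I))$ via Hilbert-function invariance is a worthwhile bookkeeping point that the paper glosses over, but it introduces no new method.
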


\vskip 2mm

Using these results, we can compute the Lyubeznik table of the following examples:

\begin{example}\label{ex2} \cite[Example 3.2]{CV20}
Let $R=\KK[x_1,\dots, x_5]$ be a polynomial ring over a field of positive characteristic. Let $I$ be the homogeneous ideal given by the $2\times 2$-minors of the matrix 
$$\begin{pmatrix}
  x_4^2+x_5^a & x_3 & x_2  \\
   x_1& x_4^2 & x_3^b-x_2 
\end{pmatrix}$$ with $\deg(x_4) = a, \deg(x_1) = \deg(x_3) = 1, \deg(x_2) = b$ and $\deg(x_5) = 2$. 
On the other hand, 
$${\rm in}(I) = (x_1x_3, x_1x_2, x_2x_3)$$ where we consider the lex term order and thus the Lyubeznik table of $R/I$ is trivial in any characteristic.
\end{example}

Binomial edge ideals satisfy  that their generic initial ideals are squarefree \cite[Theorem 2.1]{CDNG18}.


\begin{example}\label{ex3}
Let $R=\KK[x_1,\dots, x_6, y_1, \dots, y_6]$ be a polynomial ring over a field of positive characteristic. Let $J_{C_6}\subseteq R$ be the binomial edge ideal associated to the $6$-cycle $C_6$ and ${\rm gin}(J_{C_6})$ its generic initial ideal. Namely, we have: 

\vskip 2mm

$J_{C_6}= (x_1y_2-x_2y_1,x_1y_6-x_6y_1,x_2y_3-x_3y_2,-x_3y_4+x_4y_3,x_4y_5-x_5y_4,x_5y_6-x_6y_5) $
\begin{align*}
{\rm gin}(J_{C_6}) = &(x_5x_6,x_4x_5,x_3x_4,x_2x_3,x_1x_6,x_1x_2,x_4x_6y_5,x_3x_5y_4,x_2x_6y_1,x_2x_4y_3,x_1x_5y_6,x_1x_3y_2, \\
& x_3x_6y_4y_5,x_3x_6y_1y_2,x_2x_5y_3y_4,x_2x_5y_1y_6,x_1x_4y_5y_6,x_1x_4y_2y_3,x_4x_6y_1y_2y_3,x_3x_5y_1y_2y_6,\\
& x_2x_6y_3y_4y_5,x_2x_4y_1y_5y_6,x_1x_5y_2y_3y_4,x_1x_3y_4y_5y_6) 
\end{align*}

\vskip 2mm

The Lyubeznik table of $R/ {\rm gin}(J_{C_6})$ is trivial in any characteristic and thus the Lyubeznik table of $R/ J_{C_6}$ is trivial as well. 

\end{example}




\bibliographystyle{alpha}
\bibliography{References}

\end{document}